\date{October 2025}
\newif\ifdraft
\date{DRAFT: \today}
  \ifdraft\usepackage[notref,notcite]{showkeys}\fi
\newtheorem{thm}{Theorem}
\newtheorem{prop}{Proposition}
\newtheorem{lem}{Lemma}
\newtheorem{cor}{Corollary}
\theoremstyle{definition}
\newtheorem*{definition}{Definition}
\newtheorem{example}{Example}
\theoremstyle{remark}
\newtheorem{remark}{Remark}
\newcommand{\D}{\partial}
\newcommand{\inv}{^{-1}}
\newcommand{\etal}{\textit{et al.}}
\newcommand{\supr}{^{(r)}}
\renewcommand{\epsilon}{\varepsilon}
\newcommand{\R}{\mathbb{R}}
\newcommand{\C}{\mathbb{C}}
\newcommand{\N}{\mathbb{N}}
\newcommand{\PP}{\mathbb{P}}
\newcommand{\HH}{\mathbb{H}}
\newcommand{\nwc}{\newcommand}
\nwc{\qref}[1]{(\ref{#1})}
\nwc{\ip}[1]{\langle #1 \rangle}
\nwc{\np}{{n+1}}
\nwc{\one}{\mathbbm{1}}
\nwc{\supk}{^{(k)}}
\nwc{\supj}{^{(j)}}
\newcommand{\eqlab}[1]{\leavevmode\hfill\refstepcounter{equation}\label{#1}\textup{\tagform@{\theequation}}}
\newcommand{\warning}[1]{\typeout{}\typeout{WARNING: #1 at line \the\inputlineno}\typeout{}}
\newenvironment{todo}[1][TODO]{%
    \ifdraft\else\warning{TODO still present in final version}\fi
    \MakeFramed{\advance\hsize-\width \FrameRestore}\textbf{#1. }}%
    {\endMakeFramed}
    {\end{todo}}
\DeclareMathOperator{\im}{{\rm Im}}
\DeclareMathOperator{\re}{{\rm Re}}
\DeclareMathOperator{\Log}{{\rm Log}}
\DeclareMathOperator{\esssup}{{\rm ess\,sup}}
\title{On generating functions of Hausdorff moment sequences}
\author{Jian-Guo {\sc Liu} 
\thanks{Departments of Physics and Mathematics,
Duke University, Durham, NC 27708, USA. Email:jliu@phy.duke.edu}
\and Robert L. {\sc Pego}
\thanks{Department of Mathematical Sciences
and Center for Nonlinear Analysis,
Carnegie Mellon University, Pittsburgh, PA 15213, USA.
        Email: rpego@cmu.edu }
       }
\begin{document}
\maketitle

\begin{abstract}
The class of generating functions for completely monotone sequences
(moments of finite positive measures on $[0,1]$)
has an elegant characterization as the class of Pick functions analytic and positive on $(-\infty,1)$.
We establish this and another such characterization and develop a variety of consequences.
In particular, we characterize generating functions for 
moments of convex and concave probability distribution functions on $[0,1]$.
Also we provide a simple analytic proof that for any real $p$ and $r$ with $p>0$, 
the Fuss-Catalan or Raney numbers $\frac{r}{pn+r}\binom{pn+r}{n}$, $n=0,1,\ldots$ 
are the moments of a probability distribution on some interval $[0,\tau]$
{if and only if} $p\ge1$ and $p\ge r\ge 0$. The same statement holds for the binomial coefficients
$\binom{pn+r-1}n$, $n=0,1,\ldots$.

A corrigendum ({\it Trans.\ Amer.\ Math.\ Soc.}, to appear) has been included as an appendix, 
correcting gaps in the proof of Lemma~3.
\end{abstract}

\noindent\textit{Keywords:} completely monotone sequence, Fuss-Catalan numbers, complete Bernstein function,
infinitely divisible,  canonical sequence, exchangeable trials, concave distribution function, random matrices



\section{Introduction}

Given a finite positive (Borel) measure $\mu$ on $[0,1]$, its sequence of moments 
\begin{equation}\label{e:cjcm}
c_j = \int_0^1 t^j\,d\mu(t)\,, \quad j=0,1,\dots,
\end{equation}
is {\em completely monotone}. This means that 
\[
(I-S)^k c_j \ge0 \quad\mbox{ for all $j,k\ge0$,}
\]
where $S$ denotes the backshift operator given by $Sc_j = c_{j+1}$ for $j\ge0$, so that
\[
(I-S)^k c_j = \sum_{n=0}^k (-1)^n 
\binom{k}{n} c_{n+j} \, .
\]
A classical theorem of Hausdorff  states that complete 
monotonicity characterizes
such moment sequences: A sequence $c=(c_j)_{j\ge0}$ is 
the moment sequence of a finite positive measure on $[0,1]$
if and only if it is completely monotone \cite[p.~115]{Widder}.

It is well recognized \cite{Roth2008,Ruscheweyh2009}
that the generating function of a completely monotone sequence,
\begin{equation}\label{e:F0def}
F(z) = \sum_{j=0}^\infty c_j z^j = \int_0^1 \frac{1}{1-zt}\,d\mu(t) \,,
\end{equation}
belongs to the set $P(-\infty,1)$, consisting of all \textit{Pick functions} 
analytic on $(-\infty,1)$.  Recall what this means \cite{Donoghue}:
A function $f$ is a {Pick function} if $f$ is analytic in the 
upper half plane $\HH=\{z\in \C:\im z>0\}$ and leaves it invariant,
satisfying
\begin{equation}\label{e:Pickprop}
(\im z)\im f(z)\ge0
\end{equation}
 for all $z$ in the domain of $f$.
If $(a,b)$ is an open interval in the real line,
$P(a,b)$ denotes the set of Pick functions that are analytic on
$(a,b)$. This means they take real values on $(a,b)$
and admit an analytic continuation by reflection from the upper half plane 
across the interval $(a,b)$.

Conditions on $F$ which are equivalent to complete monotonicity of $(c_j)$
have been provided by  Ruscheweyh \etal~\cite{Ruscheweyh2009} and are discussed below.
Yet the following simple and useful characterizations, though closely related to known
results  (see especially Theorems~2.8-2.11 in \cite{BendatSherman1955}),
appear to have escaped explicit attention in the long literature on the subject.

\begin{thm}\label{t:F}
Let $c=(c_j)_{j\ge0}$ be a real sequence with generating function $F$
and upshifted generating function 
\begin{equation}\label{d:FB}
F_1(z) = z F(z) = \sum_{j=0}^\infty  c_j z^{j+1}\,.
\end{equation}
Then the following are equivalent.
\begin{itemize}
\item[(i)] 
$c$ is completely monotone.
\item[(ii)] $F$ is a Pick function that is analytic and nonnegative on $(-\infty,1)$.
\item[(iii)] $F_1$
is a Pick function that is analytic on $(-\infty,1)$, with $F_1(0)=0$.
\end{itemize}
\end{thm}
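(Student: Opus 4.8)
The plan is to route everything through the classical integral representation of Pick functions. The key observation, which I would isolate as a lemma, is that a function $F$ is a Pick function analytic and nonnegative on $(-\infty,1)$ if and only if it admits a \emph{Stieltjes-type} representation
\[
F(z)=a+\int_{[1,\infty)}\frac{d\tau(s)}{s-z},\qquad a\ge0,
\]
for some positive Borel measure $\tau$ on $[1,\infty)$ with $\int_{[1,\infty)}(1+s)\inv\,d\tau(s)<\infty$. The ``if'' part is immediate: each $z\mapsto(s-z)\inv$ is a Pick function positive on $(-\infty,1)$, the additive constant is nonnegative, and uniform convergence of the integral on compact subsets of $\C\setminus[1,\infty)$ gives analyticity there. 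For ``only if'' I would start from the Nevanlinna representation $F(z)=\alpha+\beta z+\int_\R\big((s-z)\inv-s(1+s^2)\inv\big)\,d\sigma(s)$; analyticity on $(-\infty,1)$ forces $\operatorname{supp}\sigma\subset[1,\infty)$, and then examining $F(x)$ as $x\to-\infty$ (after rewriting $(s-z)\inv-s(1+s^2)\inv=\frac{z}{s(s-z)}+\frac1{s(1+s^2)}$ and using monotone/dominated convergence) shows that nonnegativity forces both $\beta=0$ and $\int s\inv\,d\sigma<\infty$, after which one reassembles the integral into the displayed form with $a=F(-\infty)\ge0$ and $\tau=\sigma$.

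Granting the lemma, $(i)\Leftrightarrow(ii)$ follows by the substitution $s=1/t$: it turns the displayed representation into $F(z)=\int_{[0,1]}(1-tz)\inv\,d\mu(t)$ with $d\mu(t)=a\,\delta_0(dt)+t\,d\rho(t)$, where $\rho$ is the image of $\tau$ under $s\mapsto1/s$, and $\mu$ is a finite positive measure since $\mu([0,1])=F(0)$. Comparing power-series coefficients at $0$, the class of such $F$ is exactly the class of generating functions $\sum c_jz^j$ with $c_j=\int_0^1 t^j\,d\mu(t)$, which by Hausdorff's theorem is precisely the class of completely monotone sequences $c$. (The elementary direction $(i)\Rightarrow(ii)$ is also immediate directly: from $c_j=\int_0^1t^j\,d\mu$ one gets $F(z)=\int_0^1(1-zt)\inv\,d\mu(t)$, so $\im F(z)=(\im z)\int_0^1 t\,|1-zt|^{-2}\,d\mu(t)$ has the sign of $\im z$, while the integrand is positive on $(-\infty,1)$.)

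For $(ii)\Rightarrow(iii)$ I would use the representation of the lemma: $F_1(z)=zF(z)=az+\int_{[1,\infty)}z(s-z)\inv\,d\tau(s)$, and a one-line computation gives $\im F_1(z)=(\im z)\big(a+\int_{[1,\infty)}s\,|s-z|^{-2}\,d\tau(s)\big)$, which has the sign of $\im z$ because $s\ge1>0$; analyticity on $(-\infty,1)$ and $F_1(0)=0$ are clear. For $(iii)\Rightarrow(ii)$, I would apply the Nevanlinna representation to $F_1$ (again $\operatorname{supp}\sigma\subset[1,\infty)$ by analyticity on $(-\infty,1)$) and use the normalization $F_1(0)=0$ to rewrite it as $F_1(z)=\beta z+\int_{[1,\infty)}z\big(s(s-z)\big)\inv\,d\sigma(s)$ with $\beta\ge0$; dividing by $z$ then exhibits $F=\beta+\int_{[1,\infty)}\big(s(s-z)\big)\inv\,d\sigma(s)$ in precisely the form of the lemma, with $a=\beta$ and $d\tau=s\inv\,d\sigma$, giving $(ii)$. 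Equivalently, this last step is the standard fact that $h(z)/z$ is again a nonnegative Pick function on $(-\infty,1)$ whenever $h$ is a Pick function there with $h(0)=0$.

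The genuinely substantive point, and the one I expect to need the most care, is the ``only if'' half of the lemma: one must check that nonnegativity on $(-\infty,1)$, and not merely analyticity there, is what forces the Nevanlinna measure onto $[1,\infty)$ with $\int s\inv\,d\sigma<\infty$ and the linear term $\beta z$ to vanish, and in particular that the resulting Stieltjes integral converges for \emph{every} $z\in(-\infty,1)$ rather than only asymptotically as $z\to-\infty$. Everything else is bookkeeping: the change of variables $s\leftrightarrow1/t$, matching Taylor coefficients, and elementary computations of imaginary parts.
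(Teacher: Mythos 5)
Your proof is correct, and it reaches the same endpoints as the paper's by what is essentially the same computation carried out in reciprocal coordinates. You apply the Nevanlinna representation directly to a function analytic on $\C\setminus[1,\infty)$, obtaining a Stieltjes-type form $F(z)=a+\int_{[1,\infty)}(s-z)\inv\,d\tau(s)$ with the measure on $[1,\infty)$, and then convert to the Hausdorff moment form by the substitution $s\mapsto 1/t$ at the level of the \emph{measure}. The paper instead performs the inversion at the level of the \emph{argument}: it first sets $F_*(z)=-F_1(1/z)$ and $\hat F(z)=-F(1/z)$, which are Pick functions analytic on $\C\setminus[0,1]$, so that the Nevanlinna measure already lives on $[0,1]$ and is directly the moment measure. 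The hypotheses play identical roles in both arguments: nonnegativity of $F$ on $(-\infty,1)$ kills the linear Nevanlinna term ($\beta$ in your notation, $\hat\alpha$ in the paper's) and forces the existence of the finite limit $a=\lim_{x\to-\infty}F(x)\ge0$, which is precisely the mass of the atom at $t=0$ that appears in both proofs. Organizationally you differ in using (ii) as the hub — proving a standalone Stieltjes-representation lemma for (ii) and then deriving (ii)$\Leftrightarrow$(iii) by a division-by-$z$ device — whereas the paper proves (i)$\Leftrightarrow$(iii) first via $F_*$ and then handles (ii) separately. Your framing has the merit of making explicit the kinship with Stieltjes transforms and the complete Bernstein function characterization (a connection the paper itself remarks upon); the paper's conjugation trick has the merit of avoiding the separate change-of-variables step on the measure. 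Both are sound, and your treatment of the delicate points — killing $\beta$, obtaining $\int s\inv\,d\sigma<\infty$, and the integrability check for $d\tau=s\inv\,d\sigma$ in (iii)$\Rightarrow$(ii) — is correct.
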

To explain the terminology,
we remark that $F_1$ is the generating function of the {\em upshifted} sequence
$S^\dag c$ given by $S^\dag c_j = c_{j-1}$ for $j\ge1$ and $S^\dag c_0= 0$.
The Pick-function conditions on $F$ and $F_1$ in (ii) and (iii) mean that $F$
and $F_1$ belong to $P(-\infty,1)$, so their domain contains $\C\setminus[1,\infty)$.
Whether it is more convenient to verify the Pick property for $F_1$,
or the Pick property and the positivity condition for $F$,
seems likely to depend on the context or the application. 


It is convenient here to  summarize explicit criteria for $(c_j)_{j\ge0}$ 
to be the moment sequence for a probability distribution supported on 
an interval of the form $[0,\tau]$. This is equivalent to saying the generating
function satisfies
\begin{equation}\label{e:Fab}
F(z) = \int_0^\tau \frac1{1-tz}\,d\mu(t) \,.
\end{equation}

\begin{cor}\label{c:ab} Let $\tau>0$, and let
$c=(c_j)_{j\ge0}$ be a real sequence with $c_0=1$ and
generating function $F$.
Then the following are equivalent.
\begin{itemize}
\item[(i)] 
$c$  is the moment sequence  of
a probability measure $\mu$ on $[0,\tau]$.
\item[(ii)] $F$ is a Pick function analytic and 
nonnegative on $(-\infty,1/\tau)$ with $F(0)=1$.
\item[(iii)] $F_1(z)=zF(z)$ is a Pick function analytic 
on $(-\infty,1/\tau)$ with $F_1'(0)=1$.
\item[(iv)] The sequence $(c_j\tau^{-j})_{j\ge0}$ is completely monotone.
\end{itemize}
\end{cor}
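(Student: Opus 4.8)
The plan is to reduce the corollary to the case $\tau=1$, which is Theorem~\ref{t:F}, by a positive dilation. Introduce the rescaled sequence $\tilde c=(\tilde c_j)_{j\ge0}$ with $\tilde c_j=c_j\tau^{-j}$; the standing hypothesis gives $\tilde c_0=1$. Its generating function is $\tilde F(z)=\sum_j\tilde c_j z^j=F(z/\tau)$, and its upshifted generating function is $\tilde F_1(z)=z\tilde F(z)=\tau F_1(z/\tau)$. On the measure side, pushforward under $t\mapsto t/\tau$ is a bijection between finite positive (resp.\ probability) measures on $[0,\tau]$ and on $[0,1]$, carrying the moment sequence $(c_j)$ of a measure on $[0,\tau]$ to the moment sequence $(\tilde c_j)$ of its image on $[0,1]$. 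Hence condition (i) for $c$ is equivalent to: $\tilde c$ is the moment sequence of a probability measure on $[0,1]$.

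Next I would record how the analytic conditions transform under the dilation $z\mapsto z/\tau$ and under multiplication by the positive constant $\tau$. Since $\tau>0$, the map $z\mapsto z/\tau$ sends $\HH$ onto itself, so $g(z)=f(z/\tau)$ is a Pick function exactly when $f$ is, and $\tau f$ is a Pick function exactly when $f$ is; moreover $g$ is analytic on $(-\infty,1)$ exactly when $f$ is analytic on $(-\infty,1/\tau)$, and $g\ge0$ on $(-\infty,1)$ exactly when $f\ge0$ on $(-\infty,1/\tau)$. Applying this to $\tilde F(z)=F(z/\tau)$ and $\tilde F_1(z)=\tau F_1(z/\tau)$, and using $\tilde F(0)=F(0)=c_0=1$ together with $\tilde F_1'(0)=F_1'(0)=c_0=1$, one checks that condition (ii) of the corollary is precisely condition (ii) of Theorem~\ref{t:F} applied to $\tilde c$, and condition (iii) of the corollary is precisely condition (iii) of Theorem~\ref{t:F} applied to $\tilde c$ (there the requirement $\tilde F_1(0)=0$ is automatic, since $\tilde F_1$ has no constant term).

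Finally I would assemble the chain of equivalences. By Theorem~\ref{t:F} applied to $\tilde c$, each of (ii) and (iii) is equivalent to $\tilde c$ being completely monotone, which is (iv). By Hausdorff's theorem (recalled in the introduction), $\tilde c$ is completely monotone if and only if it is the moment sequence of a finite positive measure on $[0,1]$; since $\tilde c_0=1$, that measure has total mass $1$, i.e.\ is a probability measure; and by the first paragraph this is equivalent to (i). Therefore (i)--(iv) are equivalent.

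I do not anticipate a substantive obstacle: the argument is a change of variables. The only points requiring care are the bookkeeping of the coinciding normalizations $c_0=1$, $F(0)=1$, $F_1'(0)=1$, and the verification that the positive dilation $z\mapsto z/\tau$ preserves the Pick property while transporting the interval of analyticity from $(-\infty,1)$ to $(-\infty,1/\tau)$ --- which is exactly where the hypothesis $\tau>0$ is used.
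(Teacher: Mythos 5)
Your dilation argument is exactly the ``simple proof by dilation from Theorem~\ref{t:F}'' that the paper explicitly says it omits, and the bookkeeping (pushforward of measures, $\tilde F(z)=F(z/\tau)$, $\tilde F_1(z)=\tau F_1(z/\tau)$, invariance of the Pick class under $z\mapsto z/\tau$ and under multiplication by $\tau>0$, and the normalizations $c_0=1$, $F(0)=1$, $F_1'(0)=1$) is all correct. Nothing to add.
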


We shall omit the simple proof by dilation  from Theorem \ref{t:F}.
Due to \qref{e:Fab}, we will call sequences that satisfy the conditions
of Corollary~\ref{c:ab} \textit{dilated Hausdorff moment sequences}.
We note that when $t<0$, $1/(1-tz)$ is not a Pick function.
This has the consequence that the moment generating function $F$
for a probability distribution with nontrivial support in $(-\infty,0)$ 
is not a Pick function.

The proof of Theorem~\ref{t:F} is provided in section 2 below. 
As shown there, the equivalence of (i) and (ii)  may be inferred from 
the proof of Lemma~2.1 of \cite{Ruscheweyh2009}.
In section 3 we will describe a number of direct consequences 
of Theorem~\ref{t:F}, including criteria for complete monotonicity
of probability distributions obtained by randomization from 
exchangable trials, and a simple proof of
infinite divisibility of completely monotone probability
distributions on $\N_0=\{0,1,\ldots\}$.
In section~4,  building on work of 
Diaconis and Freedman~\cite{DiaconisFreedman2004}
and Gnedin and Pitman~\cite{GnedinPitman2008},
we characterize generating functions of moments of distribution
functions on $[0,1]$ that are either convex or concave.  

Finally, in section~5 we use Corollary~\ref{c:ab}  to provide
simple analytic proofs that for any real $p$ and $r$ with $p>0$,
the Fuss-Catalan numbers (or Raney numbers)
\[
\frac{r}{pn+r}\binom{pn+r}{n}\ , \quad n=0,1,\ldots  
\]
form a dilated Hausdorff moment sequence if and only if 
$p\ge1$ and $p\ge r\ge0$, and the same holds for the binomial sequence
\[
\binom{pn+r-1}n\ , \quad n=0,1,\ldots  .
\]
These results were proved previously
by Mlotkowski~\cite{Mlotkowski} and
Mlotkowski and Penson~\cite{MlotkowskiP14}
using free probability theory and
monotonic convolution arguments.
In section 5 we also use the results of section 4 to prove
 the existence of nonincreasing ``canonical densities'' 
associated with Fuss-Catalan sequences. 
The moments of corresponding canonical distributions turn out
to be a binomial sequence with $r=1$. Moreover, an explicit
formula for the inverse of the canonical distribution can be derived
from the following integral representation
formula for binomial coefficients: For any integer $k>0$ and real $r\ge k$,
\begin{equation}\label{e:rk}
\binom{r}k = \frac1\pi \int_0^\pi \left(\frac{ \sin x}{\sin^\theta\theta x \,\sin^{1-\theta}(1-\theta)x}\right)^r \,dx\, ,
\qquad \theta = \frac kr\ . 
\end{equation}
This formula follows from a statement in the proof of Proposition 2 in \cite{Simon2014}.

\section{Characterization}

The purpose of this section is to prove Theorem~\ref{t:F}.
First we establish the equivalence of (i) and (iii).
Supposing $F_1\in P(-\infty,1)$ as stated in (iii), our goal is 
to prove that \qref{e:cjcm} holds. 
We observe that since $F_1$ is a Pick function, 
so also is the function defined by 
\begin{equation}\label{d:Fstar}
F_*(z) = -F_1(1/z).
\end{equation}
This function is analytic on the cut plane $\C\setminus [0,1]$, with $F_*(z)\to 0$
as $|z|\to\infty$. The main representation theorems for Pick functions 
\cite[pp.~20 and 24]{Donoghue} imply  there are 
real numbers $\alpha_*\ge0$ and $\beta_*$,  
and a locally finite measure $\mu$ on $\R$ with 
increasing distribution function (denoted the same as is conventional) 
$\mu\colon\R\to\R$,
such that
\begin{equation}\label{e:Grep}
F_*(z) = \alpha_* z + \beta_* + \int_\R \left( \frac{1}{t-z}-\frac t{t^2+1}\right) d\mu(t).
\end{equation}
Moreover,  $\mu$ is characterized by the limits\footnote{See the Corrigendum below for a correction.}
\begin{equation}\label{e:mudef}
\mu(b)-\mu(a) =\mu(a,b]= \lim_{h\to0^+} \frac1\pi\int_a^b \im F_*(t+ih) \,dt, \qquad a,b\in\R.
\end{equation}
Because $\im F_*(t)=0$  for $t\in\R\setminus[0,1]$,
 the measure $\mu$ has support only on the cut $[0,1]$, 
 and it is a finite measure with total mass $\mu\R=\mu[0,1]$.
The fact that $F_*(z)\to 0$ as $z\to-\infty$ forces $\alpha_*=0$ and the cancellation of constant
terms, whence
\begin{equation}\label{e:Fsrep}
F_*(z) =  \int_0^1 \frac1{t-z}\,d\mu(t) .
\end{equation}
For $|z|<1$, then, 
geometric series expansion and use of Fubini's theorem yields
\begin{equation}\label{e:cm5}
F_1(z)=-F_*(1/z)= \int_0^1\frac{z}{1-zt}\,d\mu(t) = \sum_{j=0}^\infty z^{j+1}\int_0^1t^j\,d\mu(t).
\end{equation}
Then \qref{e:cjcm} follows immediately upon comparison with the definition of $F_1$.

For the converse, given $(c_{j})_{j\ge0}$  completely monotonic,
let $\mu$ satisfy \qref{e:cjcm} and define $F_*$ by \qref{d:Fstar}.
Then \qref{e:cm5} holds for $|z|<1$, and \qref{e:Fsrep} follows for $|z|>1$.
But this shows that $F_*$ is Pick and analytic on $\C\setminus[0,1]$, whence $F_1$ is
Pick and analytic on $\C\setminus[1,\infty)$, which means $F_1\in P(-\infty,1)$.
Hence (iii) and (i) are equivalent.

Next we show that (i) and (ii) are equivalent. We know that (i) implies (ii) due to
\qref{e:F0def}.  Conversely, suppose (ii), and 
define 
$\hat F(z)=-F(1/z).$
Then $\hat F$ is a Pick function analytic on $\C\setminus[0,1]$, thus
has a representation analogous to \qref{e:Grep} as
\begin{equation}
\hat F(z) = \hat \alpha z + \hat \beta + 
\int_\R \left( \frac{1}{t-z}-\frac t{t^2+1}\right) d\hat\mu(t).
\end{equation}
As before, $\hat\mu$ is a finite measure with support in $[0,1]$, and the fact
that $\hat F(z)\to-F(0)$ as $z\to-\infty$ forces $\hat\alpha=0$ and
\begin{equation}\label{e:hatFe1}
\hat F(z) = -F(0) + \int_0^1 \frac1{t-z}\,d\hat\mu(t).
\end{equation}
Since $F$ is a Pick function analytic and nonnegative on $(-\infty,1)$, necessarily
$F'(z)\ge0$ there, and $\lambda=\lim_{z\to-\infty}F(z)$ exists and is nonnegative.
Therefore taking $z\to0$ from the left in \qref{e:hatFe1} implies  
\[
\lambda = -F(0) + \int_0^1 \frac{d\hat\mu(t)}t\ .
\]
In particular this is finite, hence
\begin{equation}
\hat F(z) = \lambda + \int_0^1 \left(\frac{1}{t-z}-\frac1t \right)\,d\hat\mu = 
\int_0^1 \frac{z}{t-z}\,d\mu(t),
\end{equation}
where $\mu = \lambda\delta_0+\hat\mu/t$ is a finite measure on $[0,1]$.
By consequence, $F(z)=-\hat F(1/z)$ has an integral representation as in 
\qref{e:F0def}, and (i) follows.  This finishes the proof of Theorem~\ref{t:F}.

\begin{remark} \label{r:mass}
The mass that the measure $\mu$ assigns to the left end of its support is
\begin{equation}
\mu\{0\} = \lambda= \lim_{x\to-\infty} F(x)  = \lim_{x\to-\infty} \frac{F_1(x)}{x} \,.
\end{equation}
As is classically known~\cite[p.\ 164]{Widder}, one has
$c_{j}=f(j)$ for some
completely monotonic $f$ defined on $[0,\infty)$ if and only if the term
$c_0$ is minimal, and this is the case if and only if the measure
$\mu$ has no atom at $0$. 

The mass that the measure $\mu$ assigns to the right end of its
support is determined in a similar way.  In the scaled expression \qref{e:Fab}, one has
\begin{equation}\label{e:rtend}
\mu\{\tau\} = \lim_{x\uparrow 1/\tau} (1-\tau x)F(x)\,.
\end{equation}
\end{remark}


\begin{remark} Our proof of the equivalence of (i) and (ii) of Theorem~\ref{t:F}
basically follows arguments from  \cite{Ruscheweyh2009}
in a slightly different order. To compare, note that
after a trivial normalization to make $c_0=1$, 
Lemma 2.1 of \cite{Ruscheweyh2009} shows that
 $(c_j)_{j\ge0}$ is completely monotone if and only if
$F\in P(-\infty,1)$ and satisfies the additional conditions
\begin{itemize}
\item[(a)] {$\lim_{n\to\infty} F(z_n)/z_n=0$ for some sequence
$z_n\in\C$ with $\im z_n\to+\infty$, and $\im z_n\ge \delta\re z_n$
for some positive constant $\delta$,}
\item[(b)] $\limsup_{x\to-\infty} F(x)\ge0.$
\end{itemize}
If $F\in P(-\infty,1)$, then due to the Pick property \qref{e:Pickprop}, $F$ is 
increasing in the real interval $(-\infty,1)$. Consequently, condition (b)
is equivalent to nonnegativity of $F$ on $(-\infty,1)$.
Moreover, the additional condition (a) is superfluous, by the lemma below. 
Thus equivalence of (i) and (ii) follows from the proof of Lemma 2.1 in \cite{Ruscheweyh2009}.
\end{remark}

%

\begin{lem}  Suppose $F\in P(-\infty,1)$ and (b) holds. Then (a) holds.
\end{lem}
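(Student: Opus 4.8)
The plan is to read off the affine coefficient from the Nevanlinna representation of $F$, show that it must vanish, and then check (a) along the imaginary axis. Concretely: since $F\in P(-\infty,1)$, the representation theorem for Pick functions \cite[pp.~20, 24]{Donoghue} (used above for $F_*$ in \qref{e:Grep}) supplies $\alpha\ge0$, $\beta\in\R$, and a positive measure $\nu$ on $\R$ with $\int(1+t^2)\inv\,d\nu(t)<\infty$ such that
\[
F(z)=\alpha z+\beta+\int_{\R}\Bigl(\frac1{t-z}-\frac t{t^2+1}\Bigr)\,d\nu(t),
\]
and, exactly as in the passage from \qref{e:Grep} to \qref{e:Fsrep}, the vanishing of $\im F$ on $(-\infty,1)$ confines the support of $\nu$ to $[1,\infty)$. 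Differentiating under the integral on compact subintervals of $(-\infty,1)$, where $t-x$ is bounded away from $0$ uniformly in $t\ge1$, gives
\[
F'(x)=\alpha+\int_1^\infty\frac{d\nu(t)}{(t-x)^2}\ \ge\ \alpha\ \ge\ 0,\qquad x<1,
\]
which in particular re-proves that $F$ is nondecreasing on $(-\infty,1)$.

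The heart of the matter is then short. Being nondecreasing, $F$ has a limit $\lambda:=\lim_{x\to-\infty}F(x)$ in $[-\infty,F(0)]$, and since the $\limsup$ agrees with the limit for a monotone function, hypothesis (b) forces $\lambda\ge0$; in particular $\lambda$ is finite. Integrating $F'\ge\alpha$ over $(a,0)$ and letting $a\to-\infty$ gives
\[
F(0)-\lambda=\int_{-\infty}^0F'(x)\,dx\ \ge\ \int_{-\infty}^0\alpha\,dx,
\]
and the right-hand side is $+\infty$ unless $\alpha=0$; hence $\alpha=0$. The one genuine point — what I would flag as the conceptual obstacle — is recognizing that monotonicity of $F$ together with the lower bound in (b) already pins down $\alpha$ through the elementary inequality $F'\ge\alpha$, so that none of the delicate behaviour of $F$ near infinity that condition (a) superficially suggests actually needs to be analysed.

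Finally I would produce an admissible sequence by taking $z_n=in$, for which $\im z_n=n\to+\infty$ and $\re z_n=0$, so that $\im z_n\ge\delta\re z_n$ holds with, say, $\delta=1$. With $\alpha=0$ the representation gives
\[
\frac{F(in)}{in}=\frac{\beta}{in}+\frac1{in}\int_1^\infty\Bigl(\frac1{t-in}-\frac t{t^2+1}\Bigr)\,d\nu(t);
\]
the first term tends to $0$, and writing $\frac1{t-in}-\frac t{t^2+1}=\frac{in}{t^2+n^2}+\frac{t(1-n^2)}{(t^2+n^2)(t^2+1)}$ one checks that, after division by $in$, the integrand is dominated by $\frac1{t^2+n^2}+\frac{nt}{(t^2+n^2)(t^2+1)}\le\frac3{2(1+t^2)}$ (using $t^2+n^2\ge2nt$ and $n\ge1$), a $\nu$-integrable majorant, while this same bound tends to $0$ pointwise in $t$. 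Dominated convergence then yields $F(in)/(in)\to0$, which is precisely (a).
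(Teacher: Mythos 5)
Your proof is correct and follows essentially the same route as the paper: both invoke the Nevanlinna representation (your $\nu$ with $\int(1+t^2)\inv\,d\nu<\infty$ is the same as the paper's finite $\sigma$ via $d\sigma=d\nu/(1+t^2)$), both deduce that the affine coefficient must vanish because otherwise $F(x)\to-\infty$ as $x\to-\infty$ contradicting (b), and both then verify (a) along $z_n=in$ by dominated convergence. The only cosmetic difference is that you obtain $\alpha=0$ by integrating the pointwise bound $F'\ge\alpha$ over $(-\infty,0)$, whereas the paper observes directly that $F(z)/z\to b$ as $z\to-\infty$.
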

\begin{proof}
As noted in \cite{Ruscheweyh2009}, $F$ has a representation analogous
to \qref{e:Grep}, of the form
\begin{equation}\label{eq:Fpick}
F(z) = a + b z + \int_1^\infty \frac{1+tz}{t-z}\,d\sigma(t) \ ,
\end{equation}
where $a\in\R$, $b\ge0$, and  $\sigma$ is a finite measure on $[1,\infty)$.
Note that as $z\to-\infty$ along the real axis, we have
\begin{equation}\label{e:Picklim}
\frac1z
\int_1^\infty \frac {1+tz}{t-z}\,d\sigma(t) \to 0
\end{equation}
by the dominated convergence theorem. Consequently,
if $b>0$ then $F(z)\to-\infty$ as $z\to-\infty$. Thus, if (b) holds then $b=0$.
Then it follows that (a) holds with $z_n=in$, since with $z=z_n$, 
\qref{e:Picklim} holds again.
\end{proof}


\section{Consequences}
We next develop a few straightforward consequences of Theorem~\ref{t:F}. 
Recall that a composition of Pick functions is a Pick function.  
Using this and related facts, we get
ways of constructing and transforming completely monotone sequences 
through Taylor expansion of compositions.  

\subsection{Dilation}
Suppose $(c_j)_{j\ge0}$ is completely monotone and $p>0$.
Let
\[
G(z)= F(pz+1-p)= \sum_{j=0}^\infty c_{j} \sum_{k=0}^j \binom{j}{k} (1-p)^{j-k}p^k z^k.
\]
Then by Theorem~\ref{t:F}, $G$ is a Pick function analytic and nonnegative
on $(-\infty,1)$, as a consequence of the same properties for $F$.
Provided $0<p<2$ so that $|1-p|<1$, then for $|z|$ small we can write
\[
G(z) = \sum_{k=0}^\infty b_{k} z^k, \qquad 
b_{k} = \sum_{j=k}^\infty c_{j} \binom{j}{k}  (1-p)^{j-k}p^k.
\]
Then it follows $(b_k)_{k\ge0}$ is completely monotone. 

This application has a familiar interpretation in terms of compound 
probability distributions \cite[\S XII.1]{Feller1}:
Let the random variable
$S_N$ be the number of successes in $N$ independent Bernoulli trials, 
each  successful with probability $p\in(0,1)$.  
Suppose $N$ is itself random, independent of the Bernoulli trials, with 
\begin{equation}\label{e:Ndist}
\PP\{N=j\}=c_{j}\ ,\quad j\ge0.
\end{equation}
  Then the compound probability of having exactly $k$ successes is
\[
\PP\{S_N=k\} = \sum_{j=k}^\infty \PP\{N=j\}\PP\{S_j=k\} = b_{k}\,, \quad k\ge0,
\]
with $b_{k}$ as above. Thus, if $(c_j)$ is completely monotone, then so is $(b_k)$.

By averaging over $p$, we can also treat the case when independent trials are replaced by exchangeable ones,
due to a well-known theorem of B.\ de Finetti. Let $S_n$ denote
the number of successes in the first $n$ trials of an infinite exchangeable sequence
of successes and failures. Then by de Finetti's theorem  \cite[VII.4]{Feller},
there is a probability distribution $\nu$ on $[0,1]$
such that 
\begin{equation}
\PP\{S_n=k\} = \binom{n}{k}\int_0^1 t^k(1-t)^{n-k}\,d\nu(t).
\end{equation}
If $N$ is selected randomly as before, the probability of $k$ successes becomes
\begin{equation}\label{e:deFin}
\hat b_{k}:= \PP\{S_N=k\} = 
\sum_{j=k}^\infty c_{j} \binom{j}{k}\int_0^1 (1-t)^{j-k}t^k\,d\nu(t) .
\end{equation}
By approximating $\nu$ using discrete measures, and using the fact that 
convex combinations and pointwise
limits of completely monotone sequences are completely monotone, it follows that
$(\hat b_k)_{k\ge0}$ is completely monotone if $(c_j)_{j\ge0}$ is.

We remark that one can show that complete monotonicity of $(\hat b_k)_{k\ge0}$ persists
(though with loss of the probabilistic interpretation)
if the averaging in \qref{e:deFin} over $[0,1]$ is replaced by one over
a compact interval $[0,\tau]\subset[0,2)$:
\begin{equation}\label{e:deFin2}
\hat b_{k} = 
\sum_{j=k}^\infty c_{j} \binom{j}{k}\int_0^\tau (1-t)^{j-k}t^k\,d\nu(t) .
\end{equation}

\subsection{Reflection}
Let $U$ be any Pick function analytic on $(0,\infty)$. 
If $U$ has the additional property that the right limit $U(0^+)$ exists,
then it is a {\em complete Bernstein function}. (The theory of these functions,
including an extensive table of examples, is developed in \cite{Schilling_etal_Bernstein}.
Our Theorem 1 may be regarded as a discrete analog of Theorem 6.2 
in \cite{Schilling_etal_Bernstein}, which is the main
characterization theorem for complete Bernstein functions.)
Now, fix $r>0$ and let 
\begin{equation}\label{d:F1U}
F_1(z)=U(r)-U(r(1-z))\ .
\end{equation}
Then for $|z|<1$ we may write
\begin{equation}\label{e:exU1}
F_1(z)= \sum_{n=1}^\infty c_{n-1}z^n,
\qquad c_{n-1}= (-1)^{n-1} U^{(n)}(r) \frac{r^n}{n!}, \quad n\ge1.
\end{equation} 
Then $F_1$ is in $P(-\infty,1)$, hence the sequence $(c_n)_{n\ge0}$ is completely monotone
by Theorem~\ref{t:F}.
Conversely, if $(c_n)_{n\ge0}$ is completely monotone 
and $U(rz) = -F_1(1-z)$, then $U$ is a Pick function analytic on $(0,\infty)$,
and $U$ is completely Bernstein if  the left limit $F_1(1^-)<\infty$.

\subsection{Composition I}
Next, consider more general types of compositions. Let $c^{*j}_k$ denote 
the $k$-th term of the standard $j$-fold 
convolution of the sequence $c=(c_n)_{n\ge0}$
with itself, which we can write in terms of the Kronecker delta function $\delta_k^n$ 
and $\N_0=\N\cup\{0\}$ as
\[
c^{*j}_k = \sum_{n\in\N_0^j} \delta_k^{n_1+\ldots+n_j} c_{n_1}\dots c_{n_j}\,.
\]
Recall that if $b=(b_j)_{j\ge0}$ is a sequence with generating function $G$, 
then $A=G\circ F$ is the generating function for the sequence $a=(a_k)_{k\ge0}$
defined by 
\begin{equation}\label{e:compound}
a_k = \sum_{j=0}^\infty b_j c^{*j}_k\ , \quad k\ge0.
\end{equation}
By Theorem \ref{t:F}, $a$ is completely monotone 
if and only if $G\circ F$ is a  Pick function analytic and nonnegative on $(-\infty,1)$,
or equivalently $z G\circ F(z)$ is a Pick function analytic on $(-\infty,1)$.

If $b$ and $c$ are probability distributions on $\N_0$, 
then $a$ is a compound $b$ distribution with compounding distribution $c$.
In this case we have the following.

\begin{prop} 
Suppose  that $b$ and $c$ are probability distributions on $\N_0$
and that $c$ and $S b=(b_{j+1})_{j\ge0}$ are completely monotone, with $b_0\ge0$. 
Then $a$, given by \qref{e:compound},
is completely monotone.
\end{prop}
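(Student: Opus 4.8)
The plan is to apply Theorem~\ref{t:F} to the sequence $a$, whose generating function is $A=G\circ F$ with $G(z)=\sum_j b_jz^j$ and $F(z)=\sum_j c_jz^j$. By that theorem it suffices to show $A\in P(-\infty,1)$ and $A\ge0$ on $(-\infty,1)$, and the idea is to realize $A$ as a composition of $P(-\infty,1)$ functions, taking care that $F$ does not carry $(-\infty,1)$ out of the domain of $G$.

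First I would translate the hypotheses through Theorem~\ref{t:F}. Complete monotonicity of $c$ gives $F\in P(-\infty,1)$ with $F\ge0$ on $(-\infty,1)$; the Pick property makes $F$ nondecreasing there, $\lambda=\lim_{x\to-\infty}F(x)\ge0$ exists, and since $c$ is a probability distribution, monotone convergence gives $F(x)\to1$ as $x\uparrow1$. Hence (setting aside the trivial case $c=\delta_0$, where $F\equiv1$ and $a=\delta_0$) $F$ maps $(-\infty,1)$ into $[\lambda,1)\subseteq[0,1)$. Similarly, since $Sb=(b_{j+1})_{j\ge0}$ is completely monotone, its generating function $H(z)=(G(z)-b_0)/z=\sum_j b_{j+1}z^j$ lies in $P(-\infty,1)$ with $H\ge0$ on $(-\infty,1)$, and by part (iii) of Theorem~\ref{t:F} the function $zH(z)$ is in $P(-\infty,1)$. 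Adding the real constant $b_0$ leaves the imaginary part unchanged, so $G=b_0+zH\in P(-\infty,1)$; and for $x\in[0,1)$ we get $G(x)=b_0+xH(x)\ge0$ because $b_0\ge0$, $x\ge0$, and $H(x)\ge0$.

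Next I would assemble the composition. As a nonconstant Pick function, $F$ maps $\HH$ into $\HH$ and, by reflection across $(-\infty,1)$, the lower half-plane into itself; combined with $F((-\infty,1))\subseteq[0,1)$ this yields $F(\C\setminus[1,\infty))\subseteq\C\setminus[1,\infty)$, precisely the domain on which the $P(-\infty,1)$ function $G$ is analytic. Therefore $A=G\circ F$ is analytic on $\C\setminus[1,\infty)$, and being a composition of Pick functions it is itself Pick, so $A\in P(-\infty,1)$. Finally, for $x\in(-\infty,1)$ we have $F(x)\in[0,1)$, hence $A(x)=G(F(x))\ge0$; by Theorem~\ref{t:F}, $a$ is completely monotone.

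The only step needing genuine care — and the only place the hypotheses beyond mere complete monotonicity of $c$ and of $Sb$ are used — is controlling the image $F((-\infty,1))$ so that it stays inside the region of analyticity of $G$. This is where the assumption that $c$ is a \emph{probability} distribution (which forces $F(1^-)=1$) and the splitting $G=b_0+zH$ with $b_0\ge0$ (which forces $G\ge0$ on $[0,1)$) enter; the remaining manipulations with Pick functions and composition of power series are routine, and the fact that $A$ is the genuine generating function of $a$ has already been recorded above equation \eqref{e:compound}.
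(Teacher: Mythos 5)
Your proposal is correct and follows essentially the same route as the paper: establish that $F$ is Pick, nonnegative, and maps $(-\infty,1)$ into $[0,1)$, that $G$ is Pick and nonnegative on $[0,1)$, and then compose. The only place you go beyond the paper's terse statement is in spelling out \emph{why} $G$ is Pick—via $G=b_0+zH$ with $zH\in P(-\infty,1)$ by Theorem~\ref{t:F}(iii) applied to $Sb$—which the paper asserts without elaboration; this is a useful clarification rather than a different argument.
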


\begin{proof}
By Theorem \ref{t:F}, $F$ is  a Pick function
analytic and nonnegative on $(-\infty,1)$ with $F(1)=1$, 
and $G$ is a Pick function analytic and nonnegative on $[0,1)$ with $G(1)=1$. 
Then $G\circ F$ is a Pick function analytic and nonnegative on $(-\infty,1)$,
hence $a$ is completely monotone.
\end{proof}

\subsection{Infinite divisibility}

The concept of infinite divisibility has been called ``the core of the now classical
limit theorems of probability theory --- the reservoir created by the contributions
of innumerably many individual streams and developments'' \cite[Chap. XVII]{Feller}.
Recall, for example,  that a probability distribution $\mu$
on $\R$ is infinitely divisible if and only if it can arise
as the weak limit of the law of the sum of $n$ independent random variables
 with common distribution $\nu_n$ (see \cite[Thm.~XVII.1.1]{Feller} 
 or \cite[Thm.~5.2]{SteutelvanHarn}).
This means that the 
distribution function $\mu$ is the pointwise limit of $n$-fold convolutions:
 At each point of continuity of $\mu$,
\begin{equation}
\mu(x) = \lim_{n\to\infty} \nu_n^{*n}(x) \ .
\end{equation}
Theorem 1 provides an alternative route to the following long-known result
for distributions supported on $\N_0$
(see \cite{Steutel} and Theorem~10.4 of \cite{SteutelvanHarn}).

\begin{prop}
If $(c_j)$ is a probability distribution on $\N_0$
and $(c_j)$ is completely monotone, then it is infinitely divisible.
\end{prop}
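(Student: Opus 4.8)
The plan is to show that every positive power $F^{t}$ ($t>0$) of the probability generating function $F(z)=\sum_{j\ge0}c_{j}z^{j}$ is again the generating function of a probability distribution on $\N_0$; applying this with $t=1/n$ then exhibits $(c_{j})$ as the $n$-fold convolution of a probability distribution on $\N_0$ for every $n\ge1$, which is exactly infinite divisibility. First I would note that $c_{0}>0$: complete monotonicity forces $c_{0}\ge c_{1}\ge\cdots\ge0$, so $c_{0}=0$ would give $c_{j}\equiv0$, contradicting $\sum_{j}c_{j}=1$. By Theorem~\ref{t:F}, $F\in P(-\infty,1)$ and is nonnegative there; being increasing on $(-\infty,1)$ with $F(0)=c_{0}>0$, it is in fact strictly positive on $(-\infty,1)$, and by the reflection property it omits $(-\infty,0]$ on the open unit disk, so $\log F$ (principal branch) is analytic on a neighborhood of $(-\infty,1)$ and on the open unit disk. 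Since a nonconstant Pick function maps $\HH$ into $\HH$ and hence has argument strictly between $0$ and $\pi$ there, $\log F$ again maps $\HH$ into $\HH$; together with analyticity and real positivity on $(-\infty,1)$ this shows that
\[
G(z):=\log\bigl(F(z)/c_{0}\bigr)
\]
lies in $P(-\infty,1)$ and satisfies $G(0)=0$. Regarding $G$ as the upshifted generating function $F_{1}$ of part (iii) of Theorem~\ref{t:F}, the equivalence with part (i) shows that the coefficients $d_{n}$ in $G(z)=\sum_{n\ge1}d_{n}z^{n}$ form a completely monotone, hence nonnegative, sequence.

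Next I would exponentiate. For $t>0$ and $|z|<1$,
\[
F(z)^{t}=c_{0}^{\,t}\exp\Bigl(t\sum_{n\ge1}d_{n}z^{n}\Bigr)=\sum_{j\ge0}a_{j}(t)\,z^{j},
\]
and since the exponential of a power series with zero constant term and nonnegative coefficients has nonnegative coefficients, $a_{j}(t)\ge0$ for all $j$. For the total mass, Abel's theorem gives $F(z)\to\sum_{j}c_{j}=1$ as $z\uparrow1$, so $\sum_{n\ge1}d_{n}=-\log c_{0}<\infty$ and $F(z)^{t}\to c_{0}^{\,t}\exp(-t\log c_{0})=1$; by monotone convergence $\sum_{j}a_{j}(t)=\lim_{z\uparrow1}F(z)^{t}=1$. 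Thus $\bigl(a_{j}(t)\bigr)_{j\ge0}$ is a probability distribution on $\N_0$ for every $t>0$.

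Finally, set $\nu_{n}=\bigl(a_{j}(1/n)\bigr)_{j\ge0}$; its generating function is $F^{1/n}$, so the generating function of $\nu_{n}^{*n}$ is $\bigl(F^{1/n}\bigr)^{n}=F$, whence $\nu_{n}^{*n}=(c_{j})$ for every $n$, and $(c_{j})$ is infinitely divisible. Most of the argument is immediate from Theorem~\ref{t:F}; the two steps needing care are the passage from $F$ to $\log F$---verifying that the logarithm of a Pick function analytic and positive on $(-\infty,1)$ is again a Pick function of the same type, including the analyticity of $\log F$ on the unit disk (where $F$ avoids the cut $(-\infty,0]$) so that the $d_{n}$ are well defined---and the boundary bookkeeping at $z=1$ used to see that the masses $a_{j}(t)$ sum to $1$. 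I expect the first of these to be the main obstacle.
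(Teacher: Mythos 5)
Your proof is correct, but it follows a genuinely different route from the paper's. The paper argues by direct composition: since $z\mapsto z^{1/n}$ is a Pick function positive on $(0,\infty)$, the composite $F^{1/n}$ is again a Pick function analytic and nonnegative on $(-\infty,1)$, so by Theorem~\ref{t:F} it is the generating function of a completely monotone (hence nonnegative) sequence, which is the desired convolution $n$-th root; done in three lines. You instead take the logarithm and exponentiate: you verify that $\Log\bigl(F/c_0\bigr)$ is a Pick function in $P(-\infty,1)$ vanishing at $0$, invoke part (iii) of Theorem~\ref{t:F} to conclude its Taylor coefficients $d_n$ ($n\ge1$) are nonnegative, and then obtain nonnegativity of the coefficients of $F^t=c_0^t\exp\bigl(t\sum d_n z^n\bigr)$ for every $t>0$ from the elementary fact that the exponential of a power series with nonnegative coefficients and zero constant term again has nonnegative coefficients. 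This is essentially the compound-Poisson route -- your $d_n$ are, after a shift, the paper's canonical sequence $\tilde b$ from the convolution-group subsection -- and what it buys over the paper's argument is the full convolution semigroup: $(a_j(t))$ is a probability distribution on $\N_0$ for \emph{every} $t>0$, not only $t=1/n$, and one reads off the L\'evy measure directly. What the paper's composition argument buys instead is brevity and the stronger information that each root $(a_j(1/n))$ is itself completely monotone (not merely nonnegative); your exponentiation step loses this. Both proofs need the tacit remark that the root's masses sum to $1$; you make it explicit via Abel plus monotone convergence, the paper leaves it implicit.
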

\begin{proof}
For $0<r<1$ the function $G(z)=z^r$ is a Pick function, positive for $z>0$. 
If $c=(c_j)_{j\ge0}$ is completely monotone, then $F_r(z)=F(z)^r$ is a Pick function
analytic and nonnegative on $(-\infty,1)$, hence $F_r$
is the generating function for a completely monotone sequence $(a_k)_{k\ge0}$
by Theorem~\ref{t:F}.
For $r=1/n$ we have $a^{*n}_k=c_k$, because $(F_{1/n})^n=F$.
It follows $(c_j)$ is infinitely divisible.
\end{proof}

\subsection{Convolution groups and canonical sequences}
It is interesting to note that for any nontrivial completely monotone $c$ and any
real $r$, there is a simple algorithm for computing the sequence 
$a\supr$ with generating function $F(z)^r$,
arising from the characterization theorems of Hansen and Steutel~\cite{HansenSteutel}
(see \cite[Thm.~10.5]{SteutelvanHarn}).  Suppose $c_0=1$
for convenience. First compute the \textit{canonical sequence} $(b_k)_{k\ge0}$, so that
\begin{equation}
(n+1)c_{n+1} = \sum_{k=0}^n c_{n-k}b_k\,,\quad n=0,1,\ldots,
\end{equation}
then determine $a\supr$ by setting $a\supr_0=c_0^r=1$ and requiring
\begin{equation}
(n+1) a_{n+1}\supr = r \sum_{k=0}^n a_{n-k}\supr b_k\,, \quad n=0,1,\ldots.
\end{equation}
The generating functions $F$, $G$, and $A_r$, for $c$, $b$, and $a\supr$ respectively,
 satisfy
\begin{equation}
F'(z)= F(z)G(z), \quad F(0)=1\,,
\end{equation}
\begin{equation}
A'_r(z) = r A_r(z)G(z), \quad A_r(0)=1\ ,
\end{equation}
whence it follows $A_r(z)=F(z)^r$, with
\begin{equation}\label{d:tG1}
F(z)= \exp \tilde G_1(z), \qquad
\tilde G_1(z)=\int_0^z G(t)\,dt =
 \sum_{n=0}^\infty \frac{b_n z^{n+1}}{n+1}\ .
\end{equation}
The resulting set  $\{a\supr:r\in\R\}$ 
forms a convolution group: $a^{(r+s)}=a\supr * a^{(s)}$.

By \cite[Thm.~4]{HansenSteutel},
the sequence $\tilde b=(b_n/(n+1))_{n\ge0}$ is completely monotone,  
and has an interesting characterization in terms of moments
of contractive distribution functions: Namely, 
$\tilde G_1$ is the upshifted generating function for $\tilde b$, and
there is a \textit{canonical density} $w:[0,1]\to[0,1]$ (measurable)
such that 
\begin{equation}\label{e:logF}
\Log F(z) = \tilde G_1(z)
=  \int_0^1 \frac z{1-tz}\,w(t)\,dt\,.
\end{equation}
(The distribution function $\nu(t)=\int_0^t w(s)\,ds$ 
is contractive: $|\nu(t+s)-\nu(t)|\le |s|$.)

We note a curious formula for $(b_n)$.
By Taylor expansion of $\Log$,
\[
 \tilde G_1(z)= \sum_{j=1}^\infty \frac{(-1)^{j-1}}{j} (F(z)-1)^j,
\]
and  the coefficient of $z^n$ in $(F(z)-1)^j$
vanishes for $j>n$. Binomial expansion, $F(z)^k=A_k(z)$,
and use of a well-known identity \cite[(5.10)]{Graham} yields 
\begin{equation}\label{e:bnformula}
\frac{b_{n-1}}{n} =
 \sum_{j=1}^n
\sum_{k=1}^j \frac{(-1)^{k-1}}{j} \binom{j}{k} a^{(k)}_n
= \sum_{k=1}^n \frac{(-1)^{k-1}}{k} \binom{n}{k} a^{(k)}_n.
 \end{equation}

The algorithm above extends without change to the 
case of any dilated Hausdorff moment sequence $(c_j)$, 
i.e., a sequence satisfying the conditions of 
Corollary~\ref{c:ab}.
Since $F^r=(F^s)^{r/s}$ is a Pick function whenever
$F^s$ is Pick and $0\le r\le s$, and
pointwise limits of Pick functions are Pick,
$a\supr$ is guaranteed to be a dilated Hausdorff moment sequence 
for $r$ in some \textit{maximal interval}, typically of the form $[0,r_*]$. 
We may characterize this interval in terms of 
a canonical density as follows. 
Dilation of  \qref{e:logF} shows that the sequence $c=a^{(1)}$
has generating function $F=A_1$ given by a canonical density 
$w:[0,\tau]\to[0,1]$ according to
\begin{equation}\label{e:logA1}
\Log A_1(z) =  \int_0^\tau \frac z{1-tz}\,w(t)\,dt\,.
\end{equation}

\begin{prop}\label{p:rho}
Let $c=(c_n)_{n\ge0}$ be a dilated Hausdorff sequence, satisfying
the conditions of Corollary~\ref{c:ab}. 
Let $F$ be its generating function, and let $A_r=F^r$ be the 
generating function of $a\supr$ as above. 
Then $a\supr$ is a dilated Hausdorff sequence if and only if
$\ 0\le r \esssup w(t)\le 1$.
\end{prop}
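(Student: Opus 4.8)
The plan is to reduce the dilated-Hausdorff property of $a\supr$ to a statement about the measure appearing in the canonical representation \qref{e:logA1}, and then to apply Corollary~\ref{c:ab}. First I would recall that $A_r = F^r = \exp(r\log A_1)$, so by \qref{e:logA1} we have
\begin{equation}
\Log A_r(z) = \int_0^\tau \frac{z}{1-tz}\,\bigl(r\,w(t)\bigr)\,dt\,.
\end{equation}
Thus $\log A_r$ is the upshifted generating function of the sequence whose representing density is $r\,w(t)$. The key observation is that $A_r$ is itself a generating function of a dilated Hausdorff moment sequence precisely when $A_r \in P(-\infty,1/\tau)$ with $A_r(0)=1$ and $A_r$ nonnegative on $(-\infty,1/\tau)$ — equivalently, by Theorem~\ref{t:F} applied after dilation, when $A_r$ is a Pick function of the required type. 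Since $A_1 = F$ already has this property, and $A_r = A_1^r$, one direction is easy: if $0 \le r \le 1$ then $z\mapsto z^r$ is Pick and positive on $(0,\infty)$, so $A_r = A_1^r \in P(-\infty,1/\tau)$, giving the dilated Hausdorff property for all $r\in[0,1]$ regardless of $w$. The real content is to pin down for which $r>1$ the function $A_1^r$ remains Pick.

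For that, I would use the structure of $\log A_1$. Because $\log A_1(z) = \int_0^\tau \frac{z}{1-tz} w(t)\,dt$, its imaginary part for $z$ in the upper half plane is controlled by $w$, and in fact $\log A_1$ maps $\HH$ into a horizontal strip: writing $z = x+iy$ with $y>0$, one computes that $\im \log A_1(z)$ lies between $0$ and $\pi\,\esssup w$, since $\im\frac{z}{1-tz}$ has a definite sign and $\frac1\pi\im\frac{z}{1-tz}$ integrates (in the boundary-value sense \qref{e:mudef}) against $w(t)\,dt$ to a quantity bounded by $\esssup w$. Hence $A_1(z) = \exp(\log A_1(z))$ has argument in $[0,\pi\,\esssup w]$ throughout $\HH$. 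Raising to the power $r$ multiplies the argument by $r$, so $A_r = A_1^r$ maps $\HH$ into the sector with argument in $[0, \pi r\,\esssup w]$. This sector is contained in the closed upper half plane — so $A_r$ is Pick — if and only if $r\,\esssup w \le 1$. Analyticity on $(-\infty,1/\tau)$ and the normalization $A_r(0)=1$ are inherited from $A_1$ and are immediate; positivity on the real interval follows since $A_r = A_1^r$ with $A_1$ positive there. This establishes that $0 \le r\,\esssup w \le 1$ implies $a\supr$ is dilated Hausdorff.

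The converse — that $r\,\esssup w > 1$ forces $a\supr$ to fail — is where I expect the main obstacle, since a bound on the argument over all of $\HH$ does not by itself show the argument is actually attained near the boundary. The plan is to show that for $x$ slightly below $1/\tau$ and $y\to 0^+$, the argument of $A_1(x+iy)$ approaches $\pi\,\esssup w$ (or comes arbitrarily close), using the boundary behavior \qref{e:mudef}: near a Lebesgue point of $w$ where $w$ is close to its essential supremum, $\frac1\pi\im\log A_1(t+ih)\to w(t)$, and one can locate points where this forces $\arg A_1$ as close to $\pi\,\esssup w$ as desired. Then $\arg A_r = r\arg A_1$ exceeds $\pi$ at such points, so $A_r$ takes values in the lower half plane while $z$ is in the upper half plane, violating the Pick property \qref{e:Pickprop}; by Theorem~\ref{t:F} (via Corollary~\ref{c:ab}) $a\supr$ cannot be a dilated Hausdorff moment sequence. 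The delicate point is handling the case $\esssup w = \infty$ or $w$ not attaining values near its sup on a set that survives the boundary limit, which is why the essential supremum (rather than the pointwise sup) is the right quantity, and why the argument must go through Lebesgue-point / approximate-continuity estimates rather than a crude pointwise bound.
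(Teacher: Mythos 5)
Your forward direction and your identification of the converse as the technical crux are both correct, and the overall approach is sound; the difference from the paper is that you re-derive the key fact from boundary-value (Stieltjes-inversion) estimates, whereas the paper simply cites the Corollary to Lemma~5 of Hansen--Steutel, which states precisely the iff you want: $\arg A_1(z)\in(0,\pi\rho)$ for all $z\in\HH$ if and only if $|w(t)|\le\rho$ a.e. From that cited iff, the paper concludes $\arg A_1(\HH)=(0,\pi\rho_*)$ with $\rho_*=\esssup w$, and then $A_r=A_1^r$ is Pick iff $0\le r\rho_*\le1$, and Corollary~\ref{c:ab} finishes. Your plan essentially amounts to proving that lemma: the upper bound on $\im\Log A_1$ via the Poisson-kernel estimate $\frac1\pi\int_\R\frac{-\im u}{|u-t|^2}\,dt=1$ (after $u=1/z$), and the sharpness via nontangential boundary limits $\frac1\pi\im\Log A_1\to w(t)$ at Lebesgue points of $w$ with $w(t)$ near $\rho_*$. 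This is a legitimate, self-contained alternative that buys independence from the external reference at the cost of carrying out the Fatou-type boundary argument, which you sketch but do not execute. Two small corrections: since $w$ takes values in $[0,1]$ by construction (it is a canonical density), the worry about $\esssup w=\infty$ does not arise; and the boundary points you should examine for $A_1$ lie on its cut $(1/\tau,\infty)$ (equivalently, at $t\in(0,\tau)$ after the substitution $z\mapsto1/\bar z$ that turns $\Log A_1$ into a Pick function with representing density $w$ on $[0,\tau]$), not ``slightly below $1/\tau$.''
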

\begin{proof}
By the Corollary to Lemma~5 in \cite{HansenSteutel}, one has the following.
Recall $\HH=\{z\in \C\colon\im z>0\}$.
\begin{lem}
$\arg A_1(z)\in(0,\pi\rho)$ for all $z\in\HH$ if and only if 
$|w(t)|\le \rho$ for a.e.~$t$. 
\end{lem}
\noindent
By consequence, we find that since $\arg A_1(z)=0$ for $z<1$,
the image
\begin{equation}\label{e:argA1}
\arg A_1(\HH) = (0,\pi\rho_*), \quad \rho_*={\rm ess\,sup\,}|w(t)|.
\end{equation}
It follows that $A_r=A_1^r$ is a Pick function  if and only if $0\le r\rho_*\le1$.
Then the result follows from Corollary~\ref{c:ab}.
\end{proof}

\subsection{Composition II}
If, instead of composing generating functions directly, we
compose their upshifted versions, we obtain the following.

\begin{prop}\label{p:compose}
Let $b$ and $c$ be 
real sequences with upshifted generating functions $G_1$ and $F_1$
respectively. Suppose $G_1\circ F_1$ is a Pick function analytic on $(-\infty,1)$. 
Then the sequence $\hat a$ defined by 
\[
\hat a_{k-1} = 
\sum_{j=1}^k b_{j-1} c^{*j}_{k-j}\ , \quad k\ge1,
\]
has upshifted generating function $\hat A_1=G\circ F_1$, and
is completely monotone.
\end{prop}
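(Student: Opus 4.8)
The plan is to identify $\hat A_1$ with the composition $G_1\circ F_1$ of the \emph{upshifted} generating functions (this is the composition announced by the title of the subsection), and then to quote Theorem~\ref{t:F}. First I would establish the coefficient identity. Since $F_1(0)=0$, the composite $G_1\circ F_1$ is a well-defined power series with zero constant term; expanding $G_1(w)=\sum_{j\ge0}b_j w^{j+1}$ and using $F_1(z)^{j+1}=z^{j+1}F(z)^{j+1}$ together with $F(z)^{j+1}=\sum_{l\ge0}c^{*(j+1)}_l z^l$ (the ordinary generating function of the $(j+1)$-fold convolution of $c$) gives
\[
G_1(F_1(z))=\sum_{j\ge0}\sum_{l\ge0} b_j\,c^{*(j+1)}_l\,z^{\,l+j+1}\,.
\]
Collecting the coefficient of $z^k$ forces $l=k-1-j$ with $0\le j\le k-1$; relabelling the sum so that the convolution order $j+1$ runs over $1,\dots,k$ turns this coefficient into $\sum_{j=1}^{k}b_{j-1}c^{*j}_{k-j}$, which is exactly $\hat a_{k-1}$. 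Hence $G_1\circ F_1(z)=\sum_{k\ge1}\hat a_{k-1}z^k$, i.e.\ $\hat A_1=G_1\circ F_1$ is the upshifted generating function of $\hat a$ in the sense of \qref{d:FB}.

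With this in hand the conclusion is immediate. By hypothesis $\hat A_1=G_1\circ F_1$ is a Pick function analytic on $(-\infty,1)$, and since $G_1(0)=F_1(0)=0$ we also have $\hat A_1(0)=G_1(F_1(0))=0$. Thus $\hat A_1$ satisfies condition (iii) of Theorem~\ref{t:F} for the sequence $\hat a$, so by the equivalence of (i) and (iii) there, $\hat a$ is completely monotone. Because we work with the upshifted form, no separate nonnegativity hypothesis is needed.

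The only real content is the first step, and the expected obstacle is bookkeeping: justifying the composition of the two power series (which uses $F_1(0)=0$), checking that each $\hat a_{k-1}$ is a genuine finite sum so the coefficient comparison is legitimate before any analyticity is invoked, and matching indices so that the coefficient of $z^k$ in $G_1\circ F_1$ is \emph{exactly} $\sum_{j=1}^{k}b_{j-1}c^{*j}_{k-j}$. The one subtlety worth recording is that the formal power-series identity becomes an identity of analytic functions — and hence usable in Theorem~\ref{t:F} — precisely because of the standing assumption that $G_1\circ F_1$ is analytic on $(-\infty,1)$, in particular at the origin, where its Taylor coefficients are then forced to be the $\hat a_{k-1}$. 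Everything after that is a direct appeal to Theorem~\ref{t:F}.
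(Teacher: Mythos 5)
Your proposal is correct and follows essentially the same route as the paper: expand $G_1\circ F_1$ as a power series using $F_1(0)=0$, collect the coefficient of $z^k$ to identify $\hat a_{k-1}$, and then invoke Theorem~\ref{t:F}(iii). Your write-up is a bit more explicit than the paper's (the paper compresses the coefficient bookkeeping into a single display and does not separately flag $\hat A_1(0)=0$), but the substance is the same.
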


\begin{proof}
Compute $G_1\circ F_1(z)=\sum_{k=1}^\infty \hat a_{k-1}z^k$ via
\[
G_1\circ F_1(z) = \sum_{j=1}^\infty b_{j-1} z^j\left(\sum_{n=0}^\infty c_n z^n\right)^j =
\sum_{j=1}^\infty b_{j-1} z^j \sum_{n\in\N_0^j} c_{n_1}\cdots c_{n_j}  z^{n_1+\ldots+n_j}
\]
collecting terms to obtain the coefficient of $z^{k}$, $k\ge1$,
which we label as $a_{k-1}$.  
\end{proof}

\begin{example}\label{ex:reflect}
  Let $c=(c_j)_{j\ge0}$ be any completely monotone sequence. 
Then the sequence $(\hat c_k)_{k\ge0}$ of its {\em leading differences}, given by
\begin{equation}\label{d:leading}
\hat c_k = (I-S)^kc_0 = \sum_{j=0}^k \binom{k}{j}  (-1)^jc_j \ , \quad k\ge0,
\end{equation}
is completely monotone.  Actually, this is easiest to establish directly from the Hausdorff
representation \qref{e:cjcm}, since binomial expansion yields
\[
\sum_{j=0}^k \binom{k}{j}  (-1)^j\int_0^1 t^j\,d\mu(t) = \int_0^1 (1-t)^k \,d\mu(t) = 
\int_0^1 t^k \,d\hat \mu(t),
\]
where $\hat \mu(t)= \mu(1)-\mu(1-t)$ is obtained by reflection.  
It is a charming fact that taking leading differences of leading differences
gives back the original sequence. (For a more general inversion formula see
\cite[VII.1]{Feller}.)
Here, though, for later use in section~\ref{s:convex} we wish to point out how this 
is related to Theorem \ref{t:F} and Proposition~\ref{p:compose}: 
The sequence $(\hat c_k)_{k\ge0}$ in \qref{d:leading} has
upshifted generating function given by 
\begin{equation}\label{e:hatF1}
\hat F_1(z) = -F_1(H(z)), \qquad H(z)= \frac{-z}{1-z}=1-\frac1{1-z}\ .
\end{equation}
This is true because
\[
(1-z)^{-j-1} = \sum_{n=0}^\infty 
\binom{n+j}{n} z^n 
 \ ,\]
hence
\begin{eqnarray*}
-F_1(H(z))&=& -\sum_{j=0}^\infty c_j \left(\frac{-z}{1-z}\right)^{j+1} = 
\sum_{j=0}^\infty\sum_{n=0}^\infty 
(-1)^j c_j 
\binom{n+j}{n}
z^{n+j+1}
\\
&=& \sum_{k=0}^\infty \sum_{j=0}^k
\binom{k}{j}
 (-1)^j c_j 
z^{k+1}
= \sum_{k=0}^\infty
\hat c_k z^{k+1} = \hat F_1(z)\ .
\end{eqnarray*}
Because $-H$ is a Pick function in $P(-\infty,1)$ which maps $(-\infty,1)$ onto $(-1,\infty)$,
and  $z\mapsto -F_1(-z)$ is a Pick function analytic on  $\C\setminus(-\infty,-1]$, it follows
$\hat F_1$ is in $P(-\infty,1)$. Thus $(\hat c_k)_{k\ge0}$ is completely monotone
by Theorem \ref{t:F}.
 \end{example}

\section{Moments of convex and concave distribution functions}
\label{s:convex}
Work of Diaconis and Freedman \cite{DiaconisFreedman2004}
included a  characterization of
moments of probability distributions admitting monotone densities in terms of 
the triangular array  given by 
\[
c_{n,m} = \binom{n}{m}(I-S)^{n-m}c_m\ , \quad 0\le m<n\,.
\]
Subsequently, Gnedin and Pitman \cite{GnedinPitman2008} established the following criterion 
that characterizes moments of increasing densities in terms of complete monotonicity. 
Such densities correspond to distribution functions $\mu$ that are convex.
We will work with distribution functions that in general satisfy 
$\mu(0)=0\le \mu(0^+)$ and are right continuous on $(0,1]$.
If the distribution function $\mu$ is convex, necessarily $\mu(0^+)=0$
and the measure $\mu$ has no atoms except possibly at $t=1$.

\begin{definition} 
A sequence $a=(a_n)_{n\ge0}$ is {\em completely alternating} if the sequence
given by 
$(S-I)a = (a_{n+1}-a_n)_{n\ge0}$ is completely monotone. 
\end{definition}

\begin{thm}[\cite{GnedinPitman2008}]\label{t:GP}
A sequence $(c_n)_{n\ge0}$ is the sequence of moments of a probability distribution 
on $[0,1]$ having a convex distribution function $\mu$ 
if and only if $c_0 = 1$ and the sequence $(a_n)_{n\ge0}$ defined by
 \begin{equation}\label{d:aseq}
 a_0=0,\qquad a_n = n c_{n-1}, \quad n=1,2,\ldots
 \end{equation}
 is completely alternating.
 \end{thm}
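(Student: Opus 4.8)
The plan is to translate the statement into the language of generating functions and reduce it to Theorem~\ref{t:F} together with the reflection identity from Example~\ref{ex:reflect}. First I would observe that if $(c_n)_{n\ge0}$ has generating function $F$, then the sequence $(a_n)_{n\ge0}$ defined by \qref{d:aseq} satisfies $\sum_{n\ge1} a_n z^n = \sum_{n\ge1} n c_{n-1} z^n = z\frac{d}{dz}\bigl(z F(z)\bigr) = z\frac{d}{dz}F_1(z)$, i.e.\ the ordinary generating function of $(a_n)$ is $z F_1'(z)$. Hence saying $(a_n)$ is completely alternating, i.e.\ that $(a_{n+1}-a_n)_{n\ge0}$ is completely monotone, is (via Theorem~\ref{t:F}(iii), or more directly via the structure of the generating function) a statement about $F_1$. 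Concretely, the generating function of $(a_{n+1}-a_n)_{n\ge0}$ equals $(1-z)^{-1}$ times a shift of $z F_1'(z)$; I would compute that the upshifted generating function of $(a_{n+1}-a_n)$ is exactly $\frac{d}{dz}F_1(z) - \text{(constant)}$ adjusted so it vanishes at $0$, and Theorem~\ref{t:F}(iii) then says this is equivalent to $F_1'$ being (up to the normalization at $0$) a Pick function in $P(-\infty,1)$. So one direction of the desired equivalence becomes: \emph{$(c_n)$ are moments of a convex distribution function on $[0,1]$ iff $F_1'\in P(-\infty,1)$ with the appropriate value at $0$.}

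Next I would identify what "moments of a convex distribution function $\mu$ on $[0,1]$" means at the level of $F$. Writing a convex $\mu$ with $\mu(0)=\mu(0^+)=0$, $\mu(1)=1$, integration by parts gives $c_n = \int_0^1 t^n\,d\mu(t)$, and because $\mu$ is convex its right derivative $w=\mu'$ is nondecreasing on $(0,1)$; alternatively, a convex distribution function on $[0,1]$ is exactly a mixture $\mu(t) = \int_0^1 \bigl(\frac{t-s}{1-s}\bigr)_+ \,d\sigma(s) + (\text{atom at }1)$ over ``ramp'' distribution functions $\mu_s(t) = \min\{1,(t-s)_+/(1-s)\}$ supported on $[s,1]$ — this is the Krein–Milman / Choquet description of the convex cone of convex distribution functions, and it is essentially what Diaconis–Freedman and Gnedin–Pitman use. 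Computing the moments of the extreme ramp $\mu_s$ and the atom at $1$, and forming the corresponding generating functions, one sees that the cone of generating functions $F$ arising from convex $\mu$ is the closed convex cone generated by the functions $z\mapsto$ (moment g.f.\ of $\mu_s$) for $s\in[0,1)$ together with $1/(1-z)$. The core computation is to check that, after applying $F\mapsto zF'$ (or the equivalent operation on $F_1$), each of these extreme generating functions is mapped into $P(-\infty,1)$ with the right boundary behaviour, and conversely that every element of that cone of Pick functions comes from such a mixture; this is where I expect to lean on Theorem~\ref{t:F} to get the ``completely monotone $\Leftrightarrow$ Pick and nonnegative on $(-\infty,1)$'' dictionary, and on the reflection operation $F_1\mapsto -F_1(H(z))$ of Example~\ref{ex:reflect} to pass between ``increasing density on $[0,1]$'' and a tidy Pick-function condition.

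The cleanest route may well avoid the explicit Choquet decomposition: since the statement is attributed to \cite{GnedinPitman2008}, I would prove it by showing the two conditions are \emph{each} equivalent to a single generating-function condition and then matching them. On one side, ``$(a_n)$ with $a_n = nc_{n-1}$ is completely alternating'' unpacks, by the definition and Theorem~\ref{t:F}(i)$\Leftrightarrow$(iii), to: the power series $\sum_{n\ge0}(a_{n+1}-a_n)z^{n+1}$ lies in $P(-\infty,1)$ and vanishes at $0$; after the elementary resummation above this is a condition of the form ``$F_1' - c_0 \in P(-\infty,1)$'' with $F_1'(0) = c_0 = 1$, equivalently $F'(z) + zF''(z) \in P(-\infty,1)$. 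On the other side, ``$F$ is the moment g.f.\ of a convex distribution function $\mu$ on $[0,1]$ with $\mu(1)=1$'' should be shown equivalent to the \emph{same} condition by representing $\mu' = w$ (nondecreasing, $0\le w\le$ something, with a possible atom of $\mu$ at $1$) and computing $F(z) = \int_0^1 \frac{1}{1-tz}w(t)\,dt + (\text{atom term}) = \int_0^1 \frac{1-(1-t)z \cdot(\cdots)}{\cdots}$, differentiating under the integral sign, and recognizing the monotonicity of $w$ as exactly a Hausdorff-type moment condition on the increments of $w$, handled by Theorem~\ref{t:F} applied to the ``density of the density.'' The main obstacle, I expect, is the bookkeeping at the endpoints: a convex $\mu$ forces $\mu(0^+)=0$ but allows an atom at $t=1$, so I must track exactly which boundary value of $F$ (namely $\lim_{z\to1^-}(1-z)F(z)$, cf.\ Remark~\ref{r:mass}) corresponds to that atom, and make sure the normalizations $c_0=1$, $F_1'(0)=1$, and the vanishing-at-$0$ condition for the completely alternating sequence all line up; once those are pinned down, the equivalence is a direct consequence of Theorem~\ref{t:F} and the reflection trick of Example~\ref{ex:reflect}, with no further analytic input needed.
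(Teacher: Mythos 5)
The paper itself states Theorem~\ref{t:GP} as a citation of Gnedin--Pitman, but effectively re-derives it as the equivalence (i)$\Leftrightarrow$(ii) of Theorem~\ref{t:convex}, which is proved by first establishing Theorem~\ref{t:concave} (via the ``mixture of uniforms on $[0,s]$'' identity \eqref{e:sz}) and then reflecting, using $\hat c_k = (I-S)^k c_0$ and the map $H(z)=-z/(1-z)$ of Example~\ref{ex:reflect}. Your plan is in the same general spirit --- translate both sides into Pick-function conditions on generating functions and match them --- and your two suggested routes (a Choquet-type decomposition of convex distribution functions into ramps, or the reflection trick) are essentially dual to / identical with the paper's.

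However, there is a concrete error in the step you treat as ``elementary resummation,'' and it propagates. With $a_n=nc_{n-1}$, $a_0=0$, the ordinary generating function of $(a_n)$ is indeed $A(z)=zF_1'(z)$, but the generating function of $(a_{n+1}-a_n)_{n\ge0}$ is $(1-z)F_1'(z)$ and its upshifted version is $(1-z)zF_1'(z)$; it is \emph{not} $F_1'(z)-c_0$, nor is it $F'(z)+zF''(z)$. (In the paper's notation, $\hat A_1(z)=(1-z)A(z)-A(0)=(1-z)zF_1'(z)$.) Consequently the Pick condition characterizing ``$(a_n)$ completely alternating'' is ``$(1-z)F_1'(z)$ is Pick, analytic, and nonnegative on $(-\infty,1)$'' (equivalently ``$(1-z)zF_1'(z)\in P(-\infty,1)$ vanishing at $0$''), not ``$F_1'-c_0\in P(-\infty,1)$.'' The factor $(1-z)$ is not cosmetic: it is exactly what encodes the \emph{alternating} (rather than monotone) nature of $(a_n)$, and without it the convex/concave dichotomy collapses --- $F_1'$ Pick and nonnegative is the \emph{concave} criterion (Theorem~\ref{t:concave}(iv)), not the convex one. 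The ``density of the density'' remark is also too vague to check: what is actually needed is either the reflection identity $\hat F_1 = -F_1\circ H$ together with Theorem~\ref{t:concave}, or a direct representation $F(z)-\text{(atom term)} = \int_0^1 \frac{1}{1-tz}\,w(t)\,dt$ with $w$ nondecreasing and a matching differentiation-under-the-integral computation; in either case the factor $(1-z)$ reappears. So the overall strategy is salvageable, but the key formula you would have to ``compute'' is misstated, and the claimed equivalence built on it is false as written.
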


From Theorem \ref{t:F}, we directly obtain the following characterizations of 
completely alternating sequences, and moments of a convex distribution function, 
in terms of the corresponding generating functions. 
We find it convenient to consider distribution functions that are 
not necessarily normalized to be probability distribution functions.
Note that if the sequence $a$ has generating function $A$, 
then $\hat a=(S-I)a$ has upshifted generating function 
\begin{equation}
\hat A_1(z) = (1-z)A(z) - A(0)\ .
\end{equation}
And if $a$ is determined as in Theorem~\ref{t:GP}, from $c$ with upshifted generating
function $F_1$, then $A(z)=zF_1'(z)$.
\begin{prop}
A sequence $(a_n)_{n\ge0}$ is completely alternating if and only if 
its generating function $A$
has the property that the function $(1-z)A(z)$ is 
a Pick function analytic on $(-\infty,1)$.
\end{prop}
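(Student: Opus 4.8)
The plan is to reduce the statement directly to the equivalence of (i) and (iii) in Theorem~\ref{t:F}, applied not to $(a_n)$ itself but to its difference sequence. Set $\hat a = (S-I)a = (a_{n+1}-a_n)_{n\ge0}$; by definition $(a_n)$ is completely alternating exactly when $\hat a$ is completely monotone. As recorded just before the statement, if $A$ is the generating function of $a$ then the upshifted generating function of $\hat a$ is $\hat A_1(z) = (1-z)A(z) - A(0)$. (If desired, this is the one-line computation $\sum_{n\ge0}(a_{n+1}-a_n)z^{n+1} = (A(z)-a_0) - zA(z)$, using $a_0 = A(0)$.)

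Next I would invoke Theorem~\ref{t:F}, equivalence of (i) and (iii), with the sequence ``$c$'' there taken to be $\hat a$ and the function ``$F_1$'' there taken to be $\hat A_1$: the sequence $\hat a$ is completely monotone if and only if $\hat A_1 \in P(-\infty,1)$ and $\hat A_1(0)=0$. The normalization here costs nothing, since $\hat A_1(0) = (1-0)A(0) - A(0) = 0$ holds for every $A$. Hence $\hat a$ is completely monotone if and only if $(1-z)A(z) - A(0)$ is a Pick function analytic on $(-\infty,1)$.

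Finally I would discard the constant term: subtracting the real number $A(0)=a_0$ from a function changes neither its imaginary part (so the Pick property \eqref{e:Pickprop} is preserved) nor its domain of analyticity, so $(1-z)A(z) - A(0) \in P(-\infty,1)$ if and only if $(1-z)A(z)\in P(-\infty,1)$. Chaining the three steps yields the claim. There is essentially no obstacle here; the only point that wants care is the observation that the normalization $\hat A_1(0)=0$ demanded by Theorem~\ref{t:F}(iii) is automatic for any $A$, so it imposes no hidden constraint and can be dropped from the final criterion.
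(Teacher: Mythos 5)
Your proof is correct and follows exactly the route the paper intends: apply Theorem~\ref{t:F}(i)$\Leftrightarrow$(iii) to the difference sequence $\hat a=(S-I)a$, identify $\hat A_1(z)=(1-z)A(z)-A(0)$ as its upshifted generating function, observe that the normalization $\hat A_1(0)=0$ is automatic, and drop the real constant $A(0)$ since it affects neither the imaginary part nor the domain of analyticity. The paper states the proposition without a written proof precisely because this reduction is immediate, and your argument supplies exactly the missing details.
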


\begin{thm} \label{t:convex}
Let $c=(c_n)_{n\ge0}$ be a real sequence with upshifted generating function $F_1$. 
Then the following are equivalent.
\begin{itemize}
\item[(i)] $c$ is the sequence of moments of a convex distribution
function $\mu$ on $[0,1]$ with $\mu(0)=0$. 
\item[(ii)] The sequence $a$ determined from $c$ by \qref{d:aseq} is completely alternating.
\item[(iii)] The function $\hat A_1(z)=(1-z)z F_1'(z)$
is a Pick function analytic on $(-\infty,1)$.
\item[(iv)] The function $\hat A(z) = (1-z) F_1'(z)$ 
is a Pick function analytic and nonnegative on $(-\infty,1)$.
\end{itemize}
\end{thm}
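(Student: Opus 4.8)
The plan is to prove the four-fold equivalence by chaining together the Gnedin--Pitman Theorem~\ref{t:GP}, the Proposition on completely alternating sequences stated just above, and Theorem~\ref{t:F}. I would first observe that each of (i)--(iv) forces $c_0\ge0$: for (i) this is clear; for (ii) it is the nonnegativity of the completely monotone sequence $(a_{n+1}-a_n)_{n\ge0}$ at index $0$; and for (iii), (iv) it follows from Theorem~\ref{t:F} since $\hat A_1(0)=0$ and $\hat A_1'(0)=c_0=\hat A(0)$. Moreover, when $c_0=0$ one has $c_n=\int_0^1 t^n\,d\mu\le\mu[0,1]=c_0=0$ for any candidate measure, so $c\equiv0$, $a\equiv0$, $F_1\equiv0$, and all four statements hold trivially (the zero measure being convex with $\mu(0)=0$). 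Hence I may assume $c_0>0$.

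For (i)$\Leftrightarrow$(ii) I would reduce to Theorem~\ref{t:GP} by normalizing. A convex distribution function $\mu$ on $[0,1]$ with $\mu(0)=0$ has no atom at the left endpoint ($\mu(0^+)=0$, as remarked before Theorem~\ref{t:GP}) and total mass $\mu(1)=c_0$, so $\mu\mapsto\mu/c_0$ is a bijection onto the convex \emph{probability} distribution functions, sending the moment sequence $c$ to $(c_n/c_0)_{n\ge0}$. Since dividing a sequence by the positive constant $c_0$ preserves complete monotonicity, hence complete alternation, and since the sequence attached by \qref{d:aseq} to $(c_n/c_0)$ is exactly $(a_n/c_0)$, applying Theorem~\ref{t:GP} to $(c_n/c_0)$ yields (i)$\Leftrightarrow$(ii).

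For (ii)$\Leftrightarrow$(iii): the sequence $a$ has ordinary generating function $A(z)=\sum_{n\ge1}nc_{n-1}z^n=zF_1'(z)$, as recorded before the theorem, so $(1-z)A(z)=(1-z)zF_1'(z)=\hat A_1(z)$; the Proposition preceding Theorem~\ref{t:convex} says precisely that $a$ is completely alternating if and only if $(1-z)A(z)$ is a Pick function analytic on $(-\infty,1)$, which is (iii). For (iii)$\Leftrightarrow$(iv): put $g(z)=\hat A(z)=(1-z)F_1'(z)$, which is analytic at $0$ and so is the generating function of a real sequence, and note $\hat A_1(z)=z\,g(z)$ vanishes at $0$. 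The equivalence of (ii) and (iii) in Theorem~\ref{t:F}, applied to the sequence with generating function $g$, then says that $\hat A_1=zg$ is a Pick function analytic on $(-\infty,1)$ with $\hat A_1(0)=0$ if and only if $g=\hat A$ is a Pick function analytic and nonnegative on $(-\infty,1)$. This closes the cycle.

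The only step needing genuine care, rather than bookkeeping, is (i)$\Leftrightarrow$(ii): one must check that ``convex with $\mu(0)=0$'' and ``convex probability distribution function'' differ by exactly the scalar normalization --- in particular that convexity excludes an atom at $t=0$, so that Theorem~\ref{t:GP} transfers cleanly --- and that the degenerate zero measure is not inadvertently excluded. The other two equivalences are immediate consequences of results already in hand.
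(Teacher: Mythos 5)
Your argument is correct, but it takes a genuinely different route from the paper. You invoke the Gnedin--Pitman criterion (Theorem~\ref{t:GP}) as external input for the pivotal step (i)$\Leftrightarrow$(ii), after a clean normalization by $c_0$ to reduce to the probability case, and then attach (iii) and (iv) via the Proposition preceding the theorem and Theorem~\ref{t:F}. The paper instead deliberately avoids leaning on Gnedin--Pitman: it first proves Theorem~\ref{t:concave} from scratch (using mixtures of uniform distributions on $[0,s]$ together with Theorem~\ref{t:F}) and then transfers that result to the convex case by the reflection trick of Example~\ref{ex:reflect}, computing $-\hat F_2(H(z)) = z(1-z)F_1'(z) = \hat A_1(z)$ where $H(z)=-z/(1-z)$. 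So the paper re-derives the moment characterization (i)$\Leftrightarrow$(iii) internally, keeping the section self-contained and illustrating how far Theorem~\ref{t:F} reaches, while you obtain a shorter, more modular proof at the price of importing the key analytic content of (i)$\Leftrightarrow$(ii) from the literature. Both routes are legitimate; given that the paper states Theorem~\ref{t:GP} as known, nothing is wrong in citing it.

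One small imprecision worth tightening: in your degenerate case $c_0=0$, the chain $c_n=\int_0^1 t^n\,d\mu\le\mu[0,1]=c_0$ only runs under hypothesis (i). To establish that $c_0=0$ also forces $c\equiv0$ under (ii), (iii), or (iv) --- which you need before invoking the equivalence for $c_0>0$ --- you should note that under (ii) the completely monotone sequence $(a_{n+1}-a_n)$ has vanishing first term, hence is identically zero, so $a\equiv 0$ and $c\equiv 0$; and under (iii) or (iv) Theorem~\ref{t:F} again gives a completely monotone sequence with vanishing first term $\hat A(0)=c_0=0$, whence $\hat A\equiv 0$, $F_1'\equiv 0$, $F_1\equiv 0$, $c\equiv 0$. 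These are one-line checks, but they close the loop without circularity.
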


Criteria for $(c_n)_{n\ge0}$ to be the moments of a concave distribution 
function $\mu$, whose corresponding measure has a decreasing density 
on $(0,1]$ with possible atom at 0, turn out to be slightly simpler.

\begin{thm} \label{t:concave}
Let $c=(c_n)_{n\ge0}$ be a real sequence with upshifted generating function $F_1$.
Then the following are equivalent.
\begin{itemize}
\item[(i)] $c$ is the sequence of moments of a concave distribution
function $\mu$ on $[0,1]$.
\item[(ii)] The sequence $((n+1)c_n)_{n\ge0}$ is completely monotone.
\item[(iii)] The function $F_2(z)=z F_1'(z)$
is a Pick function analytic on $(-\infty,1)$.
\item[(iv)] The function $F_1'$ 
is a Pick function analytic and nonnegative on $(-\infty,1)$.
\end{itemize}
\end{thm}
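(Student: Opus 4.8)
The plan is to get (ii)$\Leftrightarrow$(iii)$\Leftrightarrow$(iv) for free from Theorem~\ref{t:F} and to concentrate the effort on (i)$\Leftrightarrow$(ii). Since $F_1(z)=\sum_{n\ge0}c_nz^{n+1}$, the sequence $d:=((n+1)c_n)_{n\ge0}$ has ordinary generating function $F_1'(z)=\sum_{n\ge0}(n+1)c_nz^n$ and upshifted generating function $zF_1'(z)=F_2(z)$, and $F_2(0)=0$ holds automatically. Applying Theorem~\ref{t:F} with $d$ in the role of $c$, complete monotonicity of $d$ is equivalent to $F_1'$ being a Pick function analytic and nonnegative on $(-\infty,1)$, which is (iv), and equally to $F_2=zF_1'$ being a Pick function analytic on $(-\infty,1)$, which is (iii). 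So those three are equivalent with no further work.

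For (i)$\Rightarrow$(ii) I would use the description of a concave distribution function fixed above: $d\mu=\lambda\delta_0+w(t)\,dt$ with $\lambda=\mu(0^+)\ge0$ and $w\colon(0,1]\to[0,\infty)$ nonincreasing and integrable. Writing $w(t)=w(1)+\int_{(0,1]}\one_{[0,s)}(t)\,(-dw(s))$ --- equivalently, integrating $\int_0^1(t^{n+1})'w\,dt$ by parts, the boundary term at $0$ vanishing because $t\,w(t)\to0$ --- and invoking Tonelli's theorem, one obtains, for every $n\ge0$,
\[
(n+1)c_n=\int_0^1 t^n\,d\rho(t),\qquad \rho:=\lambda\delta_0+w(1)\delta_1+t\,(-dw(t)),
\]
where $\rho$ is a finite positive measure on $[0,1]$ of total mass $c_0$. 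By Hausdorff's theorem this is exactly (ii). For the converse, start from $(n+1)c_n=\int_0^1 t^n\,d\rho(t)$ with $\rho$ finite and positive, split off its atom at $0$ as $\rho=\rho(\{0\})\delta_0+\rho_1$, and set $w(u):=\int_{(u,1]}t^{-1}\,d\rho_1(t)$; this $w$ is finite, nonnegative, nonincreasing and right-continuous on $(0,1]$ with $\int_0^1 w=\rho_1([0,1])$, and a second Tonelli computation gives $\int_0^1 t^nw(t)\,dt=c_n$ for $n\ge1$. Then $d\mu:=\rho(\{0\})\delta_0+w(t)\,dt$ has moment sequence $c$, and its distribution function $\mu(t)=\rho(\{0\})+\int_0^t w$ is concave on $(0,1]$ with $\mu(0)=0$, so (i) holds.

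I do not expect a genuine obstacle; the care needed is all at the two endpoints. The atom $\lambda\delta_0$ of $\mu$ contributes nothing to $c_n$ for $n\ge1$ and exactly absorbs the excess of $c_0$ over $\int_0^1 w$ --- the discrete analog, already noted in Remark~\ref{r:mass}, of being able to enlarge $c_0$ of a completely monotone sequence by putting extra mass at $0$. Similarly the $\delta_1$ term in $\rho$ records the boundary value $w(1)$ of the density and drops out when $w(1)=0$. The remaining verifications are routine Tonelli and integration-by-parts bookkeeping, carried out with the convention for ``concave distribution function'' fixed earlier (atom permitted at $0$, nonincreasing density on $(0,1]$, right-continuity on $(0,1]$).
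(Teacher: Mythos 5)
Your proof is correct, and the key measure transformation is the same as the paper's, but the route there is genuinely different. The paper argues at the level of generating functions and Pick-function representations: starting from the representation $F_1'(z)=a_0+\int_0^1(1-sz)^{-1}\,d\nu(s)$ supplied by Theorem~\ref{t:F}, it uses the identity~\eqref{e:sz} expressing $(1-sz)^{-1}$ as a $z$-derivative of a uniform-mixture integral, then integrates, divides by $z$, and applies Fubini to pass from a representation of $F_1'$ to a Cauchy--Stieltjes representation of $F$ with density $w(t)=\int_t^1 s^{-1}\,d\nu(s)$. You instead never leave the moment sequences: you invoke Hausdorff's theorem to get the representing measure $\rho$ for $((n+1)c_n)$, then exchange a Tonelli integral and a one-variable integration by parts to verify $\int_0^1 t^n w(t)\,dt = c_n$ directly, where $w(u)=\int_{(u,1]}t^{-1}\,d\rho_1(t)$ is the exact analog of the paper's $w$, with $\rho_1$ playing the role of $\nu$ and $\rho(\{0\})$ playing the role of $a_0$. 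Both arguments hinge on the same relation $d\nu(s)=-s\,dw(s)$ (you record this through the auxiliary $\delta_1$ atom carrying $w(1)$; the paper absorbs it automatically via~\eqref{e:wdef}). Your version is the more classical moment-level computation and somewhat more elementary; the paper's version, as the authors explicitly say, is chosen to showcase the generating-function viewpoint of Theorem~\ref{t:F}. Your endpoint accounting (the vanishing of $t^{n+1}w(t)$ at $0$ because $tw(t)\le\int_0^t w\to 0$, the split of the atom at $0$, and the $\delta_1$ bookkeeping for $w(1)$) is all correct.
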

Although this result can be derived from Theorem~\ref{t:convex} by
using reflection as in Example~\ref{ex:reflect} above, we prefer to
illustrate the use of Theorem~\ref{t:F} by providing a self-contained proof. 
Essentially the idea is to consider mixtures of uniform distributions on $[0,s]$,
similar to \cite{GnedinPitman2008}. In particular we make use of the following 
identity valid for $0<s\le 1$ and $z\in \C\setminus[1,\infty)$:
\begin{equation}\label{e:sz}
\frac1{1-sz} 
= \frac d{dz} \int_0^1 \frac z{1-tz} \left(\frac1s \one_{0<t<s}\right)\,dt.
\end{equation}
\begin{proof} Since (ii), (iii) and (iv) are equivalent by Theorem~\ref{t:F}, it
remains to prove (iv) is equivalent to (i). Assume (iv).
Then by Theorem~\ref{t:F},
there is a finite positive measure $\nu$ on $[0,1]$ 
with $\nu\{0\}=0$, and $a_0\ge0$, such that 
\begin{equation}\label{e:F1pr}
F_1'(z) = a_0+ \int_0^1 \frac 1{1-sz}\,d\nu(s) 
\end{equation} 
(Here we have separated out the mass $a_0$ of any atom at $s=0$.)
Using \qref{e:sz}, integration, division by $z$, and Fubini's theorem yields
\begin{equation}
F(z) -a_0 = 
\int_0^1  
\left( \int_0^1 \frac 1{1-tz} \left(\frac1s \one_{0<t<s}\right)dt \right)d\nu(s)
= \int_0^1 \frac 1{1-tz} w(t)\,dt\,,
\label{e:Fw}
\end{equation}
where 
\begin{equation}\label{e:wdef}
w(t) = \int_0^1 \left(\frac1s \one_{0<t<s}\right)\,d\nu(s) = \int_t^1 \frac{d\nu(s)}s\,.
\end{equation}
This function $w$ is decreasing on $(0,1]$, and taking $z=0$
in \qref{e:Fw} shows that $w$ is integrable on $(0,1)$, with
\[
F(0)= a_0+ \int_0^1 w(t)\,dt  \,. 
\]
Then the distribution function $\mu(t)=a_0+\int_0^t w(r)\,dr$, $t>0$, is concave, and 
\qref{e:F0def} holds as desired, proving (i).

For the converse, assume (i). Then \qref{e:Fw} holds for some $a_0\ge0$ and $w$ 
decreasing and integrable on $(0,1]$. Extend $w$ by zero for $t>1$ and define 
the measure $\nu$ such that $d\nu(s)=-s\,dw(s)$ on $(0,1]$. 
Then \qref{e:wdef} holds, and  $\int_0^1w(t)\,dt=\int_0^1d\nu(s)$
by Fubini's theorem, and one deduces \qref{e:F1pr} by reversing the steps above.
Thus (iv) follows. 
\end{proof}

Finally, we derive Theorem~\ref{t:convex} from Theorem~\ref{t:concave},
using reflection as in Example~\ref{ex:reflect}.
\begin{proof}[Proof of Theorem~\ref{t:convex}]
Suppose (i) $(c_n)_{n\ge0}$ has the moment representation \qref{e:cjcm} with convex distribution
function $\mu$, corresponding to an increasing density (with a possible atom at $1$).
Then by Example~\ref{ex:reflect}, the sequence of leading differences
$(\hat c_k)_{k\ge0}$ is represented by the reflected density (with possible atom at $0$).
This density is decreasing, implying the leading differences are moments of a 
concave distribution function.
The upshifted generating function $\hat F_1=-F_1\circ H$ given by
\qref{e:hatF1} therefore has the property that 
\[
\hat F_2(z)=z\hat F_1'(z) 
= \frac{z}{(1-z)^2} F_1'(H(z)) \,,
\]
and this is a Pick function analytic on $(-\infty,1)$. But since $H(H(z))=z$, 
\[
-\hat F_2(H(z)) = -H(z)(1-z)^2 F_1'(z) = z(1-z) F_1'(z) = \hat A_1(z)
\]
and this is a Pick function analytic on $(-\infty,1)$. This yields (iii), and (ii) and (iv)
are equivalent by Theorem~\ref{t:F}. 

In the converse direction, if we assume the sequence $a$ derived from $c$ 
in \qref{d:aseq} is completely alternating,
then $\hat A_1$ is Pick. After reversing the arguments we deduce that the leading
difference sequence $\hat c=(\hat c_k)_{k\ge0}$ is represented by a decreasing density. 
By Example~\ref{ex:reflect}, $c$ itself is represented by the reflected, increasing density,
hence by a convex distribution function.
\end{proof}

\section{Fuss-Catalan and binomial sequences}

For any real $p$ and $r$, the general \textit{Fuss-Catalan} numbers 
\cite{Graham, Mlotkowski},
also called \textit{Raney} numbers \cite{PensonZ,MlotkowskiPZ},
are defined by $A_0(p,r)=1$ and
\begin{equation}\label{d:FC}
A_n(p,r) = \frac{r}{n!} \prod_{j=1}^{n-1} (pn+r-j)
= \frac{r}{pn+r} \binom{pn+r}{n},
\quad n=1,2,\ldots
\end{equation}
For $r=1$ one has 
\[
A_n(p,1) = \frac{(pn)!}{n! (pn+1-n)!} = \frac{1}{pn-n+1} \binom{pn}{n}\ , 
\]
showing that in particular, $A_n(2,1)$ is the $n$th Catalan number
$C_n=\frac1{n+1}\binom{2n}{n}$.
The Fuss-Catalan numbers have a long history and a very large number of 
fascinating interpretations and applications, of which we only mention a couple.  
E.g., $A_n(m,1)$ counts the number of $m$-ary trees with $n$ nodes,
and $A_n(m,k)$ counts the number of sequences of 
$mn+k$ terms selected from $\{1,1-m\}$, which sum to $1$ while having 
partial sums that are all positive \cite[eq.~(7.70)]{Graham}. 

\subsection{Fuss-Catalan moments}
Recently, Alexeev \etal~\cite{Alexeev} proved that $A_n(m,1)$ arises in random matrix theory,
as the $n$th moment of the limiting distribution of scaled squared singular values of products of $m$ 
complex matrices with random i.i.d.\ entries having zero mean, unit variance, and bounded fourth moments. 
Here, we make use of Corollary~\ref{c:ab} to provide a simple analytic proof 
of the following characterization of those real $p$ and $r$ with $p>0$
for which \qref{d:FC} yields
a dilated Hausdorff moment sequence.

\begin{thm}\label{t:FC}
Let $p$ and $r$ be real with $p>0$. Then  $(A_n(p,r))_{n\ge0}$ 
is the sequence of moments of some probability distribution $\mu_{p,r}$ 
having compact support in $[0,\infty)$ if and only if  $p\ge 1$ and $p\ge r\ge0$. 
In this case, $\mu_{p,r}$ is supported in the minimal interval $[0,\tau_p]$ with
 $\tau_p = p^p/(p-1)^{p-1}$ for $p>1$, $\tau_1=1$.
\end{thm}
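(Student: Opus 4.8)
The plan is to use Corollary~\ref{c:ab}: the sequence $(A_n(p,r))_{n\ge0}$, which has $A_0=1$, is the moment sequence of a probability measure on some $[0,\tau]$ if and only if its generating function $B_{p,r}(z)=\sum_{n\ge0}A_n(p,r)z^n$ is a Pick function that is analytic and nonnegative on $(-\infty,1/\tau)$ with $B_{p,r}(0)=1$, or equivalently $zB_{p,r}(z)$ is Pick and analytic on $(-\infty,1/\tau)$. The classical starting point is the algebraic functional equation for Fuss--Catalan generating functions: if $\mathcal B_p(z)=\sum_{n\ge0}A_n(p,1)z^n$, then $\mathcal B_p$ satisfies $\mathcal B_p(z)=1+z\,\mathcal B_p(z)^p$, and $B_{p,r}(z)=\mathcal B_p(z)^r$. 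So the first step is to record these identities (they follow from the Lagrange inversion formula applied to \qref{d:FC}) and to reduce the problem to understanding when $\mathcal B_p^{\,r}$ is a Pick function analytic and positive on a half-line.

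First I would handle the branch $r=1$: I claim $\mathcal B_p\in P(-\infty,1/\tau_p)$ for $p\ge1$. The cleanest route is to exhibit the conformal/parametric inverse. Writing $w=\mathcal B_p(z)$, we have $z=(w-1)/w^p=w^{-p}-w^{1-p}=:\phi(w)$, and I would show $\phi$ maps the region $\{w:\arg w\in(0,\pi/p)\}$ (or its reflection) conformally onto the upper half-plane, with the ray $\arg w=0$, i.e. $w\in(1,\infty)$, mapping to the cut $[1/\tau_p,\infty)$ where $\tau_p=p^p/(p-1)^{p-1}$ is the maximum of $z=\phi(w)$ over $w>1$ (attained at $w=p/(p-1)$; for $p=1$ the supremum is $1$ attained as $w\to\infty$). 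Computing $\im\phi(re^{i\alpha})$ for $0<\alpha<\pi/p$ and checking it has a constant sign, one gets that $z\mapsto w$ takes $\HH$ into $\HH$ and is real and increasing on $(-\infty,1/\tau_p)$; positivity of $\mathcal B_p$ there follows since $\mathcal B_p(0)=1$ and it is increasing. Then for general $0\le r\le p$, write $\mathcal B_p^{\,r}=(\mathcal B_p)^{r/1}$ and observe that $\zeta\mapsto\zeta^s$ is Pick and positive on $(0,\infty)$ for $0\le s\le1$, so the composition $z\mapsto\mathcal B_p(z)^r$ is Pick, analytic and nonnegative on $(-\infty,1/\tau_p)$ exactly when $r\le p$ --- but this only gives $0\le r\le p$ for free once we know $\mathcal B_p^{\,p}$, equivalently $(\mathcal B_p-1)/z$, is Pick; a cleaner formulation is that $\arg\mathcal B_p(\HH)=(0,\pi/p)$ by the conformal picture above, so $\mathcal B_p^{\,r}$ maps $\HH$ into $\HH$ iff $r/p\le1$, i.e. $r\le p$, and is positive on the half-line iff $r\ge0$. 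By Remark~\ref{r:mass} the minimality of the support interval, i.e. that $\tau_p$ is the right endpoint, follows from $\mu\{\tau_p\}=\lim_{x\uparrow1/\tau_p}(1-\tau_p x)B_{p,r}(x)$ together with the behavior of $\phi$ near its critical point.

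For the necessity direction I would argue: if $p<1$, then the radius of convergence and growth of $A_n(p,r)$ must be analyzed --- one shows $A_n(p,r)$ fails the Hausdorff/complete-monotonicity test, e.g. $A_n(p,r)$ eventually changes sign or grows too fast / the generating function develops a singularity in $\C\setminus[1,\infty)$ (the branch point of $\mathcal B_p$ migrates off the positive axis when $p<1$), contradicting Corollary~\ref{c:ab}(ii). If $r>p$, then $\arg\mathcal B_p(\HH)=(0,\pi/p)$ forces $\arg\mathcal B_p(z)^r$ to exceed $\pi$ for some $z\in\HH$, so $B_{p,r}$ is not a Pick function. If $r<0$, then $B_{p,r}(0)=1$ but $B_{p,r}$ is decreasing on a left neighborhood of $0$ (since $\mathcal B_p$ is increasing and $\zeta^r$ is decreasing), violating the monotonicity forced by the Pick property on $(-\infty,1/\tau)$; alternatively $A_1(p,r)=r<0$ directly contradicts positivity of the first moment.

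The main obstacle I expect is the conformal-mapping lemma: proving rigorously that $\phi(w)=w^{-p}-w^{1-p}$ maps the sector $\{\arg w\in(0,\pi/p)\}$ univalently onto $\HH$ (for all real $p\ge1$, not just integers), and pinning down the image of the boundary rays so as to identify the cut $[1/\tau_p,\infty)$ exactly and extract the atom formula \qref{e:rtend}. This requires a careful boundary-correspondence argument --- checking $\im\phi$ does not vanish in the open sector and tracking $\phi$ along $\arg w=0$ and $\arg w=\pi/p$ --- rather than anything deep, but it is where the real work lies; everything else is bookkeeping with Corollary~\ref{c:ab} and composition of Pick functions.
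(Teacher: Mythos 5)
Your overall architecture matches the paper: reduce to Corollary~\ref{c:ab}, record the functional equation $B_p(z)=1+zB_p(z)^p$ and the identity $B_{p,r}=B_p^{\,r}$, establish the Pick property for the base case $B_p$, and then treat general $r$ by tracking arguments. The necessity arguments for $r<0$ and $r>p$ are essentially the paper's (the paper makes the $r>p$ case precise by noting $zB_p^p$ is Pick and $\arg zB_p(z)^p=\pi$ for $z<0$, then arguing $\arg zB_p^r>\pi$ nearby).

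Where you diverge is the core lemma that $B_p\in P(-\infty,1/\tau_p)$. You propose to show that $\psi_p(w)=(w-1)/w^p$ is a conformal bijection from the sector $\{\arg w\in(0,\pi/p)\}$ onto $\HH$, with $w\in(1,\infty)$ mapping to the cut. As stated this is not correct: the image $\mathcal B_p(\HH)$ is a proper Jordan subdomain of that sector, bounded below by the segment $(0,p/(p-1))$ (the image of $(-\infty,z_p)$) and above by a curve in $\HH$ joining $p/(p-1)$ to $0$ (the image of the two sides of the cut $[z_p,\infty)$). In particular $\psi_p$ restricted to $(1,\infty)$ covers $(0,z_p]$ twice, not $[z_p,\infty)$, and the bounding ray $\arg w=\pi/p$ is not sent into $\R$. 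So the ``boundary correspondence'' you anticipate as the main obstacle cannot go through in the form you describe; what \emph{is} true, and what the argument really needs, is $\sup_{\HH}\arg B_p=\pi/p$, not that the image fills the sector. The paper sidesteps all of this with a more elementary device: it differentiates \qref{e:Cpz} to obtain the ODE \qref{e:Code}, integrates along rays from the origin, and shows the solution continues to all of $\C\setminus[z_p,\infty)$ because blow-up would force $z\to0$ or $z\to z_p$ via \qref{e:Cpz}; the Pick property then follows since $\im B_p$ cannot vanish in $\HH$. For $0\le r\le p$ the paper also avoids arguments entirely: $zB_p^p=B_p-1$ is visibly Pick, vanishes at $0$, hence $B_p^p$ is Pick by Corollary~\ref{c:ab}, and then $(B_p^p)^{r/p}$ is Pick by composition. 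Finally, your treatment of the $0<p<1$ case (``the branch point migrates off the positive axis'', ``growth of $A_n$'') is too vague to be checked; the paper's argument here is crisp and worth absorbing: for $0<p<1$, $\psi_p$ is a strictly increasing bijection of $(0,\infty)$ onto all of $\R$, so $B_p$ (hence $B_{p,r}$) is real-analytic and positive on the whole line, and the only Pick functions analytic and positive on all of $\R$ are constants, since the representing measure in \qref{e:Grep}--\qref{e:mudef} must vanish.
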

Mlotkowski \cite{Mlotkowski} proved the \textit{if} part of this theorem
using techniques from free probability theory, and the \textit{only if} part
was subsequently established by Mlotkowski and Penson~\cite{MlotkowskiP14}
using arguments that involve monotonic convolution. 
Explicit representations of densities $W_{p,r}(t)$ for $\mu_{p,r}$ 
and more general probability distributions 
have been provided by Penson and Zyczkowski \cite{PensonZ} and 
Mlotkowski \etal~\cite{Mlotkowski} 
in terms of the Meijer $G$ function and hypergeometric functions. 
Haagerup and M\"oller~\cite{HaagerupMoller} have derived parametrized 
forms of the densities $W_{p,1}(t)$ in terms of trigonometric functions.
These distributions provide generalizations of previously known distributions such
as the Marchenko-Pastur distribution
\[
d\mu_{2,1}(t) = \frac1{2\pi} \sqrt{\frac{4-t}t}\,dt,
\]
and $d\mu_{2,2}(t)=t\,d\mu_{2,1}(t)$,
given by the Wigner semi-circle law centered on $t=2$.

Due to Corollary~\ref{c:ab}, Theorem~\ref{t:FC} is directly implied by the following.

\begin{lem}
Let $p$ and $r$ be real with $p>0$, and let $B_{p,r}$ denote the generating function for the sequence $(A_n(p,r))_{n\ge0}$.
Then $B_{p,r}$ is a Pick function analytic and nonnegative on some interval $(-\infty,1/\tau)$, $0<\tau<\infty$,
if and only if $p\ge 1$ and $p\ge r\ge0$. If this is the case, the minimal $\tau$ is $\tau_p$.
\end{lem}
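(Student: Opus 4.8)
The plan is to work directly with the defining algebraic relation for the generating function $B_{p,r}$ and reduce the whole problem to understanding a single conformal-type map. Recall that the Fuss-Catalan numbers satisfy the classical functional equation: if $\mathcal{B}_p(z)$ denotes the generating function of $(A_n(p,1))_{n\ge0}$, then $\mathcal{B}_p(z) = 1 + z\,\mathcal{B}_p(z)^p$, and the general case factors as $B_{p,r}(z) = \mathcal{B}_p(z)^r$. (This is the Raney/Lagrange-inversion identity; I would cite \cite{Graham} or \cite{Mlotkowski}.) So the problem splits into two pieces: first, identify exactly when $\mathcal{B}_p$ itself is a Pick function analytic and nonnegative on an interval $(-\infty,1/\tau)$, and second, use the composition rule $G(w)=w^r$ is Pick and positive on $(0,\infty)$ for $0\le r\le 1$, together with the fact that $F\mapsto F^r$ stays Pick for $r$ in a maximal interval (as in Proposition~\ref{p:rho}), to pin down the allowed range of $r$.

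**Key steps.** First I would analyze $w=\mathcal{B}_p(z)$ via its inverse: from $z = (w-1)/w^p = w^{-p}(w-1)$, set $\phi_p(w) = (w-1)w^{-p}$, so $\mathcal{B}_p = \phi_p^{-1}$ near $w=1$, $z=0$. The function $\phi_p$ maps $w=1$ to $z=0$ and has a critical point where $\phi_p'(w)=0$, i.e. at $w_* = p/(p-1)$ for $p>1$, giving the critical value $z_* = 1/\tau_p$ with $\tau_p = p^p/(p-1)^{p-1}$; for $p=1$ one has $\phi_1(w)=1-1/w$ which is already a Möbius map and $\tau_1=1$. Then I would argue that for $p\ge 1$, $\phi_p$ maps the lower half-plane (or a suitable slit region) conformally onto a region whose inverse $\mathcal{B}_p$ extends analytically to $\C\setminus[1/\tau_p,\infty)$ and is real and increasing on $(-\infty,1/\tau_p)$ — this establishes (ii) of Corollary~\ref{c:ab} for $r=1$ and gives $\tau=\tau_p$ as the minimal interval endpoint, since the singularity at $z=1/\tau_p$ (a square-root branch point coming from the simple critical point) cannot be removed. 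For $p<1$, the exponent $-p$ in $\phi_p$ is between $-1$ and $0$, $\phi_p$ has no finite critical point in $(1,\infty)$ but $\phi_p'$ fails to be sign-definite or the relevant half-plane is not mapped into itself, and I would show $\mathcal{B}_p$ is not Pick — concretely, one can check the second Taylor coefficient or a small determinant condition on the moment sequence fails, e.g. $A_1A_1 - A_0 A_2 <0$ forces $c$ not completely monotone when $p<1$. Next, for the range of $r$: given that $\mathcal{B}_p \in P(-\infty,1/\tau_p)$ is positive there with $\mathcal{B}_p(0)=1$, the composition $w\mapsto w^r$ being Pick and positive on $(0,\infty)$ exactly for $0\le r\le 1$ shows $B_{p,r}=\mathcal{B}_p^r$ is Pick, nonnegative, analytic on $(-\infty,1/\tau_p)$ for $0\le r\le 1$. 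By Proposition~\ref{p:rho} (or directly), the maximal such interval of $r$ is $[0,r_*]$ with $r_* = 1/\operatorname{ess\,sup} w(t)$ for the canonical density; here one computes $\arg \mathcal{B}_p(\HH) = (0,\pi/p)$ from the conformal picture above (the critical point has order two, halving angles, and the relevant boundary arc subtends angle $\pi/p$), so $\mathcal{B}_p^r$ is Pick iff $0\le r \le p$, i.e. $0\le r\le p$. Combined with $p\ge 1$, this is exactly $p\ge 1$ and $p\ge r\ge 0$. Finally, for $r<0$ one notes $B_{p,r}(0)=1$ but the coefficient $A_1(p,r)=r$ is negative, so $c$ fails complete monotonicity immediately (a completely monotone sequence with $c_0=1$ has all $c_j\ge 0$).

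**Main obstacle.** The delicate part is the conformal-mapping argument for $\mathcal{B}_p$ with general real $p\ge 1$ (not just integer $p$): establishing that $\phi_p(w)=(w-1)w^{-p}$, where $w^{-p}$ needs a branch cut, maps an appropriate region (the image under $\mathcal{B}_p$ of the upper half-plane, which a priori one does not know) biholomorphically, and in particular that $\mathcal{B}_p$ does extend past the disk of convergence to all of $\C\setminus[1/\tau_p,\infty)$ and lands in $\HH$ when $z\in\HH$. I expect to handle this by the standard device: show $\phi_p$ restricted to a half-neighborhood of $w=1$ is univalent, that its only critical point on the real axis $w>1$ is the simple one at $w_*$, and that along the two sides of the slit $[1/\tau_p,\infty)$ in the $z$-plane the preimage stays in a single sheet; then argue the Pick property via $\operatorname{Im}$-sign-preservation of $\phi_p$ on the relevant boundary, or equivalently verify the argument bound $\arg\mathcal{B}_p(\HH)\subseteq(0,\pi/p)$ by monotonicity of $\arg\phi_p(e^{i\theta}\cdot)$. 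Computing $\operatorname{ess\,sup}|w(t)| = 1/p$ for the canonical density of $\mathcal{B}_p$, which directly gives the sharp range $r\le p$, is the cleanest route and sidesteps some of the mapping bookkeeping, so I would lean on Proposition~\ref{p:rho} and the Lemma preceding it rather than tracking the boundary correspondence by hand.
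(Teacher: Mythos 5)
Your plan follows essentially the same skeleton as the paper's proof: use the Lagrange-inversion relation $B_p(z)=1+zB_p(z)^p$, pass to the inverse map $\psi_p(w)=(w-1)w^{-p}$, locate the simple critical point at $w_*=p/(p-1)$ to read off $\tau_p$, write $B_{p,r}=B_p^r$, and then pin down the admissible range of $r$. The differences in the $p\ge 1$ case are mostly cosmetic: the paper continues $B_p$ to $\C\setminus[1/\tau_p,\infty)$ by integrating an ODE along rays (which avoids having to set up any conformal boundary-correspondence), and instead of computing $\arg B_p(\HH)=(0,\pi/p)$ by tracking the critical point, it uses a much slicker algebraic shortcut: since $zB_p(z)^p=B_p(z)-1$ is itself a nontrivial Pick function vanishing at $0$, Corollary~\ref{c:ab} forces $B_p^p$ to be Pick directly, and then $B_p^r=(B_p^p)^{r/p}$ for $0\le r\le p$ is handled by composition with $z\mapsto z^{r/p}$. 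That identity is worth knowing about; it sidesteps the delicate mapping bookkeeping you flag as the main obstacle, and also replaces the appeal to Proposition~\ref{p:rho} (which in the paper is applied only \emph{after} the lemma is proved, to get the canonical density, so leaning on it here risks a circularity in the exposition).

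There is, however, a genuine gap in your treatment of $0<p<1$. You propose to show ``$\mathcal{B}_p$ is not Pick'' by a Hankel-determinant test. But even granting that, it only rules out $r=1$: the set of $r\ge0$ for which $B_p^r$ is Pick is an interval $[0,r_*]$, and knowing $1\notin[0,r_*]$ gives $r_*<1$, not $r_*=0$, so it does not exclude the possibility that $B_{p,r}$ is Pick for some small $r>0$. (Also, as stated your auxiliary suggestion is simply false: for $p<1$ one computes $\psi_p'(w)=w^{-p-1}((1-p)w+p)>0$ for all $w>0$, so $\psi_p'$ \emph{is} sign-definite there --- indeed that is precisely why $\psi_p$ maps $(0,\infty)$ monotonically onto all of $\R$.) The paper's argument for this case is different in kind and handles every $r>0$ at once: since $\psi_p$ is a global real-analytic bijection $(0,\infty)\to\R$ when $p<1$, the inverse $B_p$, and hence every power $B_{p,r}=B_p^r$, is real-analytic and positive on the whole line. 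But a Pick function analytic across all of $\R$ has, by \qref{e:mudef}, vanishing representing measure $\mu$ in \qref{e:Grep}, hence is affine, and positivity on $\R$ then forces it to be constant --- contradiction. You should replace the determinant test with this global-analyticity argument (or with some other argument that covers all $r>0$, not just $r=1$). Your disposal of $r<0$ via $A_1(p,r)=r<0$ is fine and matches the paper's conclusion, though the paper phrases it via monotonicity of $B_{p,r}$ on $(-\infty,z_p)$.
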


\begin{proof} The proof breaks into various cases.\footnote{See the Corrigendum below for repairs to gaps in steps 2 and 4.}

1. \textit{The case $p=1=r$.} Since $A_n(1,1)=1$ for all $n$, $B_{1,1}(z)=1/(1-z)$
and this is a Pick function analytic and positive on the maximal interval $(-\infty,1)$.

2. \textit{The case $p>1=r$.}
Let $B_p=B_{p,1}$ denote the generating function for $(A_n(p,1))_{n\ge0}$. 
It is well known \cite{Graham, Mlotkowski}
that $B_p(z)$ satisfies the functional relation
\begin{equation}\label{eq:Cz}
B_p(z) = 1+ z B_p(z)^p \ ,
\end{equation}
as can be checked using the Lagrange inversion formula. 
Note $B_p(z)$ cannot vanish, so \qref{eq:Cz} is equivalent to  
\begin{equation}\label{e:Cpz}
z = \psi_p( B_p(z)), \qquad \psi_p(c) = \frac{c-1}{c^p} = c^{1-p}-c^{-p},
\end{equation}
The function $\psi_p$ is analytic and strictly increasing on the 
interval $(0,p/(p-1))$, rising from $-\infty$ to the value 
\[
  z_p : =1/\tau_p = (p-1)^{p-1}/p^p \ .
\] 
By consequence $B_p$ is analytic, positive,
and strictly increasing on $(-\infty,1)$, 
hence satisfies the Pick property in a neighborhood of this interval.

We continue $B_p$ analytically to the domain $\C\setminus[z_p,\infty)$
by using a differential equation that $B_p$ satisfies. Namely, \qref{e:Cpz} implies
\begin{equation}\label{e:Code}
B_p'(z) = \frac1z\, \frac{B_p(B_p-1)}{p-(p-1)B_p} \ .
\end{equation}
We integrate along rays $t\mapsto  t e^{i\theta}$ from $t=t_0$ near the origin, for fixed
$\theta\in(0,2\pi)$.
By continuation theory for ordinary differential equations,
the solution exists for $t$ in some maximal interval $[t_0,T)$
with the property that if $T<\infty$, then as $t\uparrow T$, either $|B_p|\to\infty$ 
or $B_p\to p/(p-1) $. The first case is not possible since by \qref{e:Cpz}, 
$|B_p|\to\infty$ implies $te^{i\theta}\to0$.
And the second case is not possible since by \qref{e:Cpz}, $B_p\to p/(p-1)$ 
implies $t e^{i\theta}\to z_p$.  Therefore $T=\infty$ for every $\theta$.

This provides an analytic continuation of $B_p$ to $\C\setminus[z_*,\infty)$. 
Necessarily $\im B_p>0$ everywhere
in the upper half plane $\im z>0$, since $\im B_p$ cannot vanish 
due to \qref{e:Cpz}. Hence $B_p$ is a Pick function,
and is analytic and positive on the maximal interval $(-\infty, z_p)$.
This finishes the proof for $p>r=1$.

3. \textit{The case $p\ge1$, $p\ge r\ge0$.} 
For any real $p$ and $r$, the generating function $B_{p,r}$ 
satisfies the Lambert equation
\begin{equation}\label{e:Bpr}
B_{p,r}(z) = B_p(z)^r \,.
\end{equation}
(See \cite[eq.~(5.60)]{Graham} and \cite[eq.~(3.2)]{Mlotkowski},
which is based on \cite[p.~148]{Riordan}.)
Since $B_p$ is analytic and never vanishes or takes negative values,
$\Log \circ B_p$ is a Pick function, and 
$B_p(z)^r=\exp(r\Log B_p(z))$.
Thus $B_p(z)^r$ is analytic in the upper half plane, 
and positive and increasing on the maximal interval 
$(-\infty,z_p)$, with limit $0$ at $-\infty$ and value $1$ at $z=0$.

We claim $B_{p,r}$ is a Pick function:
Note $zB_p^p(z)=B_p(z)-1$ and this is a nontrivial Pick function analytic
on $(-\infty,z_p)$ that vanishes at $z=0$. 
By Corollary~\ref{c:ab}, then, $B_p(z)^p$ itself must be a Pick function.
Since $z\mapsto z^{r/p}$ is a Pick function, it follows  $(B_p(z)^p)^{r/p}=B_p(z)^r$ is a Pick function.

4. \textit{The case $r>p\ge1$.} Recall $zB_p(z)^p$ is a Pick function
analytic on $(-\infty,1)$. This function is negative for $z<0$,
so necessarily  
\[
\arg zB_p(z)^p=\pi\,,\quad z<0.
\]
Since $0<\arg z<\pi$ implies $0<\arg B_p(z)<\pi$, it follows that by taking
$z=e^{i\theta}$, the quantity 
\[
\arg zB_p(z)^r =  \arg z B_p(z)^p + (r-p)\arg B_p(z)
\]
takes values ranging from $0$ to more than $\pi$ as $\theta$ varies from 0 to $\pi$.
Consequently $zB_p(z)^r$ cannot be a Pick function.

5. \textit{The case $0<p<1$, $r>0$}. In this case, $\psi_p$ is globally strictly monotone
on  $(0,\infty)$ and maps this interval analytically onto $\R$. 
This means the inverse function $B_p$ is globally real analytic on $\R$,
and the same is true for $B_{p,r}$ for any $r>0$ by \qref{e:Bpr}. 
In this case, $B_{p,r}$ cannot be a Pick function. 
Indeed, the only Pick functions analytic 
and positive globally on $\R$ are constant. This is because
they have a general representation as in \qref{e:Grep}, 
with $\mu$
as in \qref{e:mudef}. Hence $d\mu=0$
for a Pick function globally analytic on $\R$.

6. \textit{The case $p>0>r$.} Due to the Lambert equation \qref{e:Bpr},
$B_{p,r}$ is decreasing on $(-\infty,z_p)$ in this case,
hence $B_{p,r}$ cannot be a Pick function.
\end{proof}

We next apply Theorem~\ref{t:concave} to deduce
that the distribution functions $\mu_{p,1}$ have decreasing densities for $p\ge2$.  
Numerical plots in \cite{MlotkowskiPZ} indicate that this condition 
is not sharp for noninteger values of $p$. 
See \cite{PensonZ,MlotkowskiPZ} for detailed information concerning 
densities $W_{p,r}$ for all $p\ge1$, $0<r\le p$.
Note, however, that for $r=p$ one has 
$A_n(p,p)=A_{n+1}(p,1)$, whence follows the simple relation
\begin{equation}\label{e:mupp}
d\mu_{p,p}(t) = t\, d\mu_{p,1}(t)\,.
\end{equation}

\begin{prop} If $p\ge2$, then the probability distribution function
$\mu_{p,1}$ is concave, and continuous at 0. We have
\[
d\mu_{p,1}(t) = W_{p,1}(t)\,dt, \qquad d\mu_{p,p}(t) = t \,W_{p,1}(t)\,dt,
\]
where $W_{p,1}$ is a decreasing, integrable function on $(0,1]$.
\end{prop}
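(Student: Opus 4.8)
The plan is to deduce this from Theorem~\ref{t:concave} applied to the rescaled sequence $\tilde c=(\tilde c_n)_{n\ge0}$, $\tilde c_n=A_n(p,1)\tau_p^{-n}$, which by Theorem~\ref{t:FC} is a Hausdorff moment sequence on $[0,1]$ with some moment measure $\tilde\mu$; by uniqueness of moments, $\mu_{p,1}$ is the image of $\tilde\mu$ under $t\mapsto\tau_p t$. Writing $z_p=1/\tau_p$, the generating function of $\tilde c$ is $\tilde F(z)=B_p(z_p z)$ and its upshifted version is $\tilde F_1(z)=z\tilde F(z)$. By Theorem~\ref{t:concave}, (iv)$\Rightarrow$(i), it suffices to show that $\tilde F_1'$ is a Pick function that is analytic and nonnegative on $(-\infty,1)$.

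The first step is to find a closed form for $\tilde F_1'$. From $\tilde F_1'=\tilde F+z\tilde F'$, the differential equation \qref{e:Code} for $B_p$, and the relation \qref{e:Cpz}, a short computation (with $c=B_p(z_p z)$) gives
\[
\tilde F_1'(z)=c+\frac{c(c-1)}{p-(p-1)c}=\frac{c\bigl[(p-1)-(p-2)c\bigr]}{p-(p-1)c}=:g(c).
\]
The heart of the argument is then the analysis of the rational function $g$. Partial fractions yield
\[
g(c)=\frac{p-2}{p-1}\,c-\frac1{(p-1)^2}+\frac{p}{(p-1)^3}\cdot\frac1{\tfrac{p}{p-1}-c},
\]
so $g$ has a single simple pole, at the real point $p/(p-1)$, with positive residue $p/(p-1)^3$, and linear coefficient $(p-2)/(p-1)$. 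It is precisely the hypothesis $p\ge2$ that makes this linear coefficient nonnegative; hence for $p\ge2$ the function $g$ maps $\HH$ into $\HH$ and is real on $\R\setminus\{p/(p-1)\}$, so it is a Pick function. Moreover $g(0)=0$ (by direct substitution) and $g$, being Pick, is increasing on the interval $(0,p/(p-1))$ to the left of the pole, so $g>0$ there.

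To conclude, recall from the proof of the preceding Lemma that $B_p$ is a Pick function mapping $(-\infty,z_p)$ bijectively onto $(0,p/(p-1))$; since $z\mapsto z_p z$ is a Pick function sending $(-\infty,1)$ onto $(-\infty,z_p)$, the composition $\tilde F_1'=g\circ B_p\circ(z_p\,\cdot\,)$ is a Pick function, analytic and positive on $(-\infty,1)$. Theorem~\ref{t:concave} then shows $\tilde\mu$ is concave on $[0,1]$, with a decreasing, integrable density $\tilde w$ on $(0,1]$ plus an atom at $0$ of mass $\lim_{z\to-\infty}\tilde F_1'(z)$. By \qref{e:Cpz}, $B_p(w)\to0$ as $w\to-\infty$ (since $\psi_p(c)=c^{1-p}-c^{-p}\to-\infty$ as $c\to0^+$), so this atom has mass $g(0)=0$; thus $\mu_{p,1}$ is continuous at $0$, and dilating back gives $d\mu_{p,1}(t)=W_{p,1}(t)\,dt$ with $W_{p,1}(t)=\tau_p^{-1}\tilde w(t/\tau_p)$ decreasing and integrable on $(0,\tau_p]$. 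Finally $d\mu_{p,p}(t)=t\,d\mu_{p,1}(t)=t\,W_{p,1}(t)\,dt$ is exactly \qref{e:mupp}.

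The main obstacle is the bookkeeping that produces the explicit form $g(c)$ from \qref{e:Code}–\qref{e:Cpz} and its partial-fraction decomposition with the correct signs: once the linear coefficient is identified as $(p-2)/(p-1)$ and the residue as positive, the Pick property of $g$—and with it the whole proof—falls out, and the remaining ingredients (stability of the Pick class under composition, the dilation, and reading off the atom at $0$) are routine, modulo observing that Theorem~\ref{t:concave} dilates in the same way that Corollary~\ref{c:ab} does from Theorem~\ref{t:F}.
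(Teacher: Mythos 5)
Your proof is correct and follows the same strategy as the paper's: express $F_1'$ in terms of $B_p$ via the differential equation \eqref{e:Code}, observe that the resulting rational function of $B_p$ is a nonnegative combination of Pick terms exactly when $p\ge2$, and invoke Theorem~\ref{t:concave}. Two differences are worth recording. First, you explicitly rescale to $[0,1]$ before applying Theorem~\ref{t:concave} (working with $\tilde F_1'(z)=g(B_p(z_pz))$ on $(-\infty,1)$), whereas the paper applies the theorem directly to $F_1(z)=zB_p(z)$ and states the Pick/positivity property on ``$(-\infty,1)$'', which for $p>1$ should read $(-\infty,z_p)$ before dilation; your version is the more careful one. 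Second, your partial-fraction computation yields
\[
F_1'=\frac{p-2}{p-1}\,B_p+\frac{1}{(p-1)^2}\left(\frac{p}{p-(p-1)B_p}-1\right),
\]
whereas the paper's displayed formula \eqref{e:F1p} has coefficient $\frac{1}{p-1}$ on the second term. A check at $z=0$ ($B_p=1$, so $F_1'(0)=c_0=1$) shows the paper's coefficient gives $\frac{2p-3}{p-1}$, so the paper has a typo there; your coefficient $\frac{1}{(p-1)^2}$ is the correct one. Since both putative coefficients are positive for $p>1$, this does not affect the paper's Pick-positivity conclusion, but your explicit decomposition (identifying the pole at $p/(p-1)$ with residue $p/(p-1)^3>0$ and linear coefficient $(p-2)/(p-1)$) makes the dependence on the hypothesis $p\ge2$ especially transparent. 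The remaining steps — $g(0)=0$ so the atom of $\mu_{p,1}$ at $0$ vanishes by Remark~\ref{r:mass}, and $d\mu_{p,p}=t\,d\mu_{p,1}$ from \eqref{e:mupp} — are handled correctly.
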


\begin{proof} The function $F_1=zB_{p}(z)$ satisfies 
\begin{equation}\label{e:F1p}
F_1' = \frac{p-2}{p-1}B_p + \frac1{p-1} \left(\frac{p}{p-(p-1)B_p}-1\right)\ ,
\end{equation}
and if $p\ge2$, this is a Pick function analytic and positive on $(-\infty,1)$,
fulfilling condition (iv) of Theorem~\ref{t:concave}.
By Remark~\ref{r:mass}, the measure $\mu_{p,1}$ has no atom at 0,
since $B_p(z)\to0$ as $z\to-\infty$.
\end{proof}

\subsection{Fuss-Catalan canonical sequences and densities}
Next we study the {canonical sequences and densities}
that are associated with Fuss-Catalan sequences
according to Theorem~\ref{t:FC} and Proposition~\ref{p:rho}.
\begin{thm}\label{t:wp}
For every $p\ge1$, there is a canonical sequence $(b_n^{(p)})$, given explicitly by
\begin{equation}
\label{e:bpfc}
b_{n-1}^{(p)} = n \sum_{k=1}^n \frac{(-1)^{k-1}}k \binom{n}{k} A_n(p,k) 
= \binom{pn-1}{n-1}\ ,
\quad n=1,2,\ldots,
\end{equation}
 and a nonincreasing density 
$w_p\colon[0,\tau_p]\to [0,1/p]$ satisfying $pw_p(0^+)=1$ and $w_p(\tau_p^-)=0$, 
such that
\begin{equation}\label{e:logBp}
\frac1r \Log B_{p,r}(z) =  \sum_{n=0}^\infty \frac{b_n^{(p)}}{n+1}z^{n+1} = 
  \int_0^{\tau_p} \frac z{1-tz}\,w_p(t)\,dt\,,\ \ 0<r\le p.
\end{equation}
Moreover, $(b_n^{(p)}\tau_p^{-n})_{n\ge0}$ is a completely monotone sequence.
\end{thm}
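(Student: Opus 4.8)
The plan is to run the canonical-sequence construction around \eqref{d:tG1} on the dilated Hausdorff sequence $c=(A_n(p,1))_{n\ge0}$, which by Theorem~\ref{t:FC} (and the Lemma that yields it) is a dilated Hausdorff moment sequence with minimal constant $\tau_p$. Its generating function is $F=B_p$, and the Lambert identity \eqref{e:Bpr} gives $A_r=F^r=B_{p,r}$, so that
\[
\tfrac1r\Log B_{p,r}(z)=\Log B_p(z)=\tilde G_1(z)=\sum_{n=0}^\infty\frac{b_n^{(p)}}{n+1}\,z^{n+1},
\]
where $(b_n^{(p)})$ is the canonical sequence of $(A_n(p,1))$; this is the representation asserted in \eqref{e:logBp}. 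The first equality in \eqref{e:bpfc} is then exactly \eqref{e:bnformula} with $a_n^{(k)}=[z^n]B_p(z)^k=[z^n]B_{p,k}(z)=A_n(p,k)$. For the closed form I would use Lagrange inversion: putting $u(z)=B_p(z)-1$, the functional equation \eqref{eq:Cz} reads $u=z(1+u)^p$, so with $\phi(u)=(1+u)^p$ and $H(u)=\Log(1+u)$,
\[
[z^n]\Log B_p(z)=[z^n]H(u(z))=\tfrac1n[u^{n-1}]\bigl(H'(u)\phi(u)^n\bigr)=\tfrac1n[u^{n-1}](1+u)^{pn-1}=\tfrac1n\binom{pn-1}{n-1},
\]
and comparison with $\Log B_p(z)=\sum_{n\ge1}\frac{b_{n-1}^{(p)}}{n}z^n$ gives $b_{n-1}^{(p)}=\binom{pn-1}{n-1}$, i.e.\ $b_n^{(p)}=\binom{pn+p-1}{n}$.

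Next, complete monotonicity of $(b_n^{(p)}\tau_p^{-n})_{n\ge0}$. With $G^{(p)}(z)=\sum_n b_n^{(p)}z^n=B_p'(z)/B_p(z)$, equation \eqref{e:Code} together with $B_p-1=zB_p^p$ yields
\[
zG^{(p)}(z)=\frac{zB_p'(z)}{B_p(z)}=\frac{B_p(z)-1}{p-(p-1)B_p(z)}\,.
\]
The right-hand side is a M\"obius transformation of $B_p$ of positive determinant ($=1$), hence a Pick function at every point where $B_p$ omits the pole value $p/(p-1)$; by the analytic-continuation argument used to prove Theorem~\ref{t:FC}, $B_p$ is analytic on $\C\setminus[1/\tau_p,\infty)$ and attains $p/(p-1)$ only in the limit $z\to1/\tau_p$. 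So $zG^{(p)}$ is a Pick function analytic on $(-\infty,1/\tau_p)$, vanishing at $0$ with $(zG^{(p)})'(0)=G^{(p)}(0)=b_0^{(p)}=1$. Corollary~\ref{c:ab} then shows $(b_n^{(p)})$ is the moment sequence of a probability measure on $[0,\tau_p]$, i.e.\ $(b_n^{(p)}\tau_p^{-n})$ is completely monotone, with $\tau_p$ minimal because of the singularity at $z=1/\tau_p$.

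The canonical density $w_p$ realizing the integral in \eqref{e:logBp} exists by \cite[Thm.~4]{HansenSteutel} (the dilated form of \eqref{e:logF}--\eqref{e:logA1}), and comparing power series gives $\int_0^{\tau_p}t^n w_p(t)\,dt=b_n^{(p)}/(n+1)$. That $w_p$ is nonincreasing I would deduce from Theorem~\ref{t:concave}: the sequence $c_n:=\tau_p^{-n}b_n^{(p)}/(n+1)$ is the moment sequence of the rescaled density $u\mapsto\tau_p w_p(\tau_p u)$ on $[0,1]$, and since $((n+1)c_n)=(\tau_p^{-n}b_n^{(p)})$ is completely monotone by the previous step, Theorem~\ref{t:concave} forces that density, hence $w_p$, to be nonincreasing (and integrable). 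Proposition~\ref{p:rho}, combined with the sharp threshold $0\le r\le p$ for $B_{p,r}$ to be a dilated Hausdorff moment sequence (Theorem~\ref{t:FC}), gives $\esssup w_p=1/p$, so $w_p$ takes values in $[0,1/p]$; and since $w_p$ is nonincreasing, $w_p(0^+)=\esssup w_p=1/p$, i.e.\ $pw_p(0^+)=1$.

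The remaining assertion $w_p(\tau_p^-)=0$ is where the real work lies, and is the step I expect to be the main obstacle. I would recover $w_p$ by Stieltjes--Perron inversion of $\Phi(z)=-\Log B_p(1/z)=\int_0^{\tau_p}\frac{w_p(t)}{t-z}\,dt$, which via the reflection symmetry of $B_p$ gives $w_p(t)=\frac1\pi\arg B_p\bigl((1/t)^+\bigr)$ for $t\in(0,\tau_p)$, the boundary value being taken from $\HH$ on the cut $[1/\tau_p,\infty)$. Writing that value as $\rho e^{i\pi w_p(t)}$ and substituting into $B_p-1=zB_p^p$ with $z=1/t$ real, the imaginary and real parts reduce (for $p>1$) to $\rho=\sin(p\pi w_p)/\sin((p-1)\pi w_p)$ and $t=(\sin p\pi w_p)^p/\bigl((\sin(p-1)\pi w_p)^{p-1}\sin(\pi w_p)\bigr)$ — essentially the content of \eqref{e:rk} — a parametrization of the graph of $w_p$ by $\phi=\pi w_p\in(0,\pi/p)$, from which $\phi\downarrow0$ forces $t\to\tau_p$ with $w_p\to0$, while $\phi\uparrow\pi/p$ forces $t\to0$. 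The delicate points are justifying that the branch of the $p$th power singled out by the analytic continuation of $B_p$ is the one with $\arg B_p((1/t)^+)\to\pi/p$ as $t\to0^+$, and that the displayed $t(\phi)$ is strictly decreasing, so that it parametrizes the graph of $w_p$ over all of $(0,\tau_p)$; once this is in hand the stated endpoint values are immediate. (For $p=1$ one simply has $w_1\equiv1$ on $[0,1]$.)
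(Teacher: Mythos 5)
Your overall strategy matches the paper's: set up the canonical sequence of $(A_n(p,1))_{n\ge0}$, invoke Proposition~\ref{p:rho} to obtain \eqref{e:logBp} with $\esssup w_p=1/p$, derive the first equality of \eqref{e:bpfc} from \eqref{e:bnformula}, and deduce that $w_p$ is nonincreasing from Theorem~\ref{t:concave} applied to the Pick function $zB_p'(z)/B_p(z)$. Two of your steps differ from the paper's and are worth noting.

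For the closed form $b_{n-1}^{(p)}=\binom{pn-1}{n-1}$, you use Lagrange inversion with $H(u)=\Log(1+u)$ and $\phi(u)=(1+u)^p$ applied to $u=z(1+u)^p$, giving $[z^n]\Log B_p(z)=\frac1n[u^{n-1}](1+u)^{pn-1}$. This is a clean, self-contained derivation. The paper instead rewrites \eqref{e:Bpder} as $\sum_n b_n^{(p)}z^n=B_p^{p-1}/(1-p+pB_p^{-1})$ and quotes the known identity (5.61) in Graham et al.\ with $r=p-1$; both routes are correct, and yours has the advantage of not relying on an external table.

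For the complete monotonicity of $(b_n^{(p)}\tau_p^{-n})$, your M\"obius-transform observation that $zG^{(p)}(z)=(B_p-1)/(p-(p-1)B_p)$ has positive determinant is a valid way to see this is a Pick function analytic on $(-\infty,1/\tau_p)$ and apply Corollary~\ref{c:ab}. The paper instead runs the implications of Theorem~\ref{t:concave} in the order (iii)$\Rightarrow$(i)$\Rightarrow$(ii), establishing the nonincreasing density first and deducing complete monotonicity afterwards. These amount to the same thing.

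The genuine gap is exactly where you flag it: the endpoint value $w_p(\tau_p^-)=0$. Your proposed Stieltjes--Perron inversion plus trigonometric parametrization can be made to work (it is essentially the route taken in the paper's \emph{Remark} following the Corollary, where $w_p\inv$ is identified explicitly via \eqref{e:rk}), but it forces you to resolve delicate branch issues and monotonicity of the parametrization, none of which you actually carry out. The paper dodges all of this. From the representation $\Log B_p(z)=\int_0^{\tau_p}\frac z{1-tz}w_p(t)\,dt$ together with the fact that $w_p$ is nonincreasing, one identifies the boundary value $w_p(\tau_p^-)$ as the Remark~\ref{r:mass}--style ``mass at the right endpoint'' of the Pick function $zB_p'(z)/B_p(z)$:
\begin{equation*}
w_p(\tau_p^-)=\lim_{z\uparrow 1/\tau_p}(1-\tau_p z)\,\frac{zB_p'(z)}{B_p(z)}
=\frac1{p-1}\lim_{z\uparrow 1/\tau_p}\frac{1-\tau_p z}{\,p-(p-1)B_p(z)\,},
\end{equation*}
using \eqref{e:Bpder}. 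Now $1-\tau_p z=1-\tau_p\psi_p(B_p(z))$, and since $\psi_p'\bigl(p/(p-1)\bigr)=0$ (the critical point giving $z_p=1/\tau_p$), the numerator is $O\bigl((p/(p-1)-B_p)^2\bigr)$ while the denominator is $O\bigl(p/(p-1)-B_p\bigr)$; the limit is therefore $0$. This is a one-line L'H\^opital-type computation with no branch analysis required, and is how you should close the gap.
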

\begin{proof} For $r>0$, the sequence $a\supr=(A_n(p,r))_{n\ge0}$
is a dilated Hausdorff sequence if and only if $r\le p$.
By Proposition~\ref{p:rho}, we infer \qref{e:logBp},
with $p\esssup w_p(t) =1$. (Note: $w_1(t)\equiv1$.)
To prove that $w_p$ is nonincreasing, observe that \qref{e:Code} implies
\begin{equation}\label{e:Bpder}
(p-1) \frac{z B_p'(z)}{B_p(z)} = \frac1{p-(p-1)B_p(z)} -1,
\end{equation}
and this is a Pick function analytic on $(-\infty,1/\tau_p)$. 
Let 
\begin{equation}\label{e:tG12}
\tilde G_1(z) = \Log B_p(z/\tau_p) = \int_0^1 \frac z{1-sz}\, w_p(\tau_p s)\,ds.
\end{equation}
Then $z\tilde G_1'(z)=\hat z B_p'(\hat z)/B_p(\hat z)$ ($\hat z=z/\tau_p$)
is a Pick function analytic on $(-\infty,1)$.  Theorem~\ref{t:concave}
implies the density $s\mapsto w_p(\tau_p s)$ is nonincreasing on $(0,1]$. 
Therefore $pw_p(0^+)=1$. (See \qref{e:wpzero} below for the proof that $w_p(\tau_p^-)=0$.)

The first expression in \qref{e:bpfc} follows from \qref{e:bnformula}.
Noticing that \qref{e:Bpder} implies
\begin{equation}\label{e:bppbp}
\sum_{n=0}^\infty b^{(p)}_n z^n =\frac{B_p'(z)}{B_p(z)}= \frac{B_p(z)^{p-1}}{1-p+pB_p(z)^{-1}}\ ,
\end{equation}
the second expression follows from identity (5.61) in \cite{Graham} (setting $r=p-1$).
Finally, plugging $\hat z=z/\tau_p$ into the power series in \qref{e:logBp}
and using Theorem~\ref{t:concave} part (ii), we infer
$(b_n^{(p)}\tau_p^{-n})$ is completely monotone as claimed.
\end{proof}

\subsection{Binomial sequences}
In a recent paper, Mlotkowski and Penson~\cite{MlotkowskiP14} established necessary
and sufficient conditions for the binomial sequence 
\begin{equation}\label{e:biseq}
\binom{pn+r-1}{n} \qquad n=0,1,\ldots
\end{equation}
to be the moment sequence of some probability distribution on some interval $[0,\tau]$. 
In this subsection we will provide an alternative proof of this characterization
based on Corollary~\ref{c:ab}.  First, however, we note that the case $r=1$ is connected
with the canonical density $w_p$ described in Theorem~\ref{t:wp}.

\begin{cor}
For every real $p>1$, the binomial sequence 
$\binom{pn}{n}$, $n=0,1,\ldots$ is the moment sequence for
the probability distribution function $1-pw_p(t)$ on $[0,\tau_p]$, having
 generating function
\begin{equation}\label{e:biwp}
\sum_{n=0}^\infty \binom{pn}n z^n = \int_0^{\tau_p} \frac1{1-tz} \,d(1-pw_p(t)).
\end{equation}
\end{cor}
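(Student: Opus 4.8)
The plan is to obtain \qref{e:biwp} by differentiating the canonical-density representation of $\Log B_p$ and integrating by parts, after which the moment statement follows from a geometric-series expansion. Taking $r=1$ in \qref{e:logBp} and recalling $B_p=B_{p,1}$ gives the identity of functions analytic on $\C\setminus[1/\tau_p,\infty)$,
\[
\Log B_p(z)=\int_0^{\tau_p}\frac{z}{1-tz}\,w_p(t)\,dt .
\]
Differentiating in $z$ under the integral sign — legitimate since the integrand and its $z$-derivative are jointly continuous for $t\in[0,\tau_p]$ and $z$ in a compact subset of $\C\setminus[1/\tau_p,\infty)$ — yields $\,B_p'(z)/B_p(z)=\int_0^{\tau_p}w_p(t)(1-tz)^{-2}\,dt$.

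For $z\neq0$ one has $(1-tz)^{-2}=z^{-1}\partial_t(1-tz)^{-1}$, so a Riemann--Stieltjes integration by parts in $t$ (valid because $w_p$ is nonincreasing, hence of bounded variation, and $t\mapsto(1-tz)^{-1}$ is continuous on $[0,\tau_p]$ for $z\notin[1/\tau_p,\infty)$) converts this into
\[
\frac{B_p'(z)}{B_p(z)}=\frac1z\left(\left[\frac{w_p(t)}{1-tz}\right]_{t=0^+}^{t=\tau_p^-}-\int_0^{\tau_p}\frac{dw_p(t)}{1-tz}\right)=\frac1z\left(-\frac1p-\int_0^{\tau_p}\frac{dw_p(t)}{1-tz}\right),
\]
where the boundary term is evaluated using the endpoint values $w_p(\tau_p^-)=0$ and $p\,w_p(0^+)=1$ furnished by Theorem~\ref{t:wp}. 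Multiplying by $pz$ and using $d(1-pw_p)=-p\,dw_p$,
\[
1+pz\,\frac{B_p'(z)}{B_p(z)}=\int_0^{\tau_p}\frac{1}{1-tz}\,d\bigl(1-pw_p(t)\bigr).
\]

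It remains to recognize the left side as $\sum_{n\ge0}\binom{pn}{n}z^n$. By \qref{e:bppbp} together with the explicit formula $b_{n-1}^{(p)}=\binom{pn-1}{n-1}$ of \qref{e:bpfc}, one has $\,z\,B_p'(z)/B_p(z)=\sum_{n\ge1}\binom{pn-1}{n-1}z^n$, so it is enough to note the absorption identity $p\binom{pn-1}{n-1}=\binom{pn}{n}$ (a special case of $\binom{x}{k}=\frac xk\binom{x-1}{k-1}$ with $x=pn$, $k=n$), valid for real $p$ and every integer $n\ge1$. This gives \qref{e:biwp}. Finally, since $w_p$ is nonincreasing with $p\,w_p(0^+)=1>0=p\,w_p(\tau_p^-)$ — this is where $p>1$ is used — the function $t\mapsto1-pw_p(t)$ is nondecreasing and increases from $0$ to $1$ on $[0,\tau_p]$, hence is a probability distribution function there; expanding $(1-tz)^{-1}=\sum_n t^nz^n$ in \qref{e:biwp} and applying Fubini (or, equivalently, invoking Corollary~\ref{c:ab}) exhibits $\binom{pn}{n}$ as its $n$th moment, as claimed.

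I expect the only point needing care to be the integration by parts at the endpoints $0$ and $\tau_p$: one must be sure that no mass of the measure $d(1-pw_p)$ is dropped there and that the boundary term equals exactly $-1/p$ rather than some unknown quantity. The boundary assertions $p\,w_p(0^+)=1$ and $w_p(\tau_p^-)=0$ of Theorem~\ref{t:wp} are precisely what make this clean; everything else is routine manipulation of power series and of the functional identities already established for $B_p$.
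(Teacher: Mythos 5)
Your proof is correct and follows essentially the same route as the paper: start from the $r=1$ case of \qref{e:logBp}, differentiate in $z$, trade the $z$-derivative for a $t$-derivative, integrate by parts in $t$, and recognize $1+pz\,B_p'(z)/B_p(z)$ as the generating function of $\binom{pn}{n}$ via $\binom{pn}{n}=p\binom{pn-1}{n-1}=p\,b^{(p)}_{n-1}$.

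There is one subtle point of logical bookkeeping you should be aware of. You invoke $w_p(\tau_p^-)=0$ as ``furnished by Theorem~\ref{t:wp},'' and the theorem does state it; but if you read the proof of Theorem~\ref{t:wp} you will see it explicitly defers the proof of this endpoint value to equation \qref{e:wpzero}, which lives \emph{inside the proof of the present Corollary}. The paper therefore leaves the boundary term in the form $pw_p(\tau_p^-)/(1-\tau_p z)$, then evaluates
\[
w_p(\tau_p^-)=\lim_{z\uparrow 1/\tau_p}(1-\tau_p z)\,\frac{zB_p'(z)}{B_p(z)}
=\frac{\tau_p}{(p-1)^2}\,\psi_p'\!\left(\frac{p}{p-1}\right)=0,
\]
using \qref{e:Bpder} and \qref{e:Cpz} (note $\psi_p'$ vanishes at its critical point $p/(p-1)$), and only then concludes the boundary term drops. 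Taken at face value, your proof would be circular: you use the Corollary's byproduct to prove the Corollary. The fix is trivial — the displayed limit is self-contained and independent of the Corollary's statement — but you should either reproduce that one-line computation yourself or flag that the endpoint value is established by it rather than simply citing Theorem~\ref{t:wp}. Everything else — the justification of differentiation under the integral, the bounded-variation integration by parts, the absorption identity, and the verification that $1-pw_p$ is a bona fide distribution function — is sound and matches the paper's argument in substance.
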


\begin{proof}
Noting that $\binom{pn}n = pb_{n-1}^{(p)}$ for $n\ge1$, we use \qref{e:logBp}
to compute that
\begin{eqnarray*}
\sum_{n=0}^\infty \binom{pn}n z^n &=& 1+ pz 
\int_0^{\tau_p} \frac{\D}{\D z} \left(\frac z{1-tz}\right)\,w_p(t)\,dt \\
&=& 
1+ \int_0^{\tau_p} \frac{\D}{\D t} \left(\frac 1{1-tz}\right)\,pw_p(t)\,dt \\
&=& 
\int_0^{\tau_p}\frac 1{1-tz} \, d(1-pw_p(t)) + \frac{pw_p(\tau_p^-)}{1-\tau_pz}
\ .
\end{eqnarray*}
Comparing the last line of this calculation to the first and using \qref{e:logBp},
\qref{e:Bpder}, and \qref{e:Cpz}, 
we find
\begin{eqnarray}
w_p(\tau_p^-) &=& \lim_{z\uparrow 1/\tau_p} (1-\tau_p z) z B_p'(z)/B_p(z) 
\nonumber \\&=&
\frac{1}{p-1}  \lim_{z\uparrow 1/\tau_p} \frac{1-\tau_p z}{p-(p-1)B_p(z)}
\nonumber \\&=&
\frac{\tau_p}{(p-1)^2} \psi_p'\left(\frac p{p-1}\right) = 0.
\label{e:wpzero} \end{eqnarray}
This finishes the proof.
\end{proof}

In the case $p=2$ one can obtain an explicit formula for $w_2(t)$
by elementary means. From the formula
\begin{equation}
\binom{2n}{n} = \frac1\pi \int_0^\pi (4\cos^2(u/2))^n\,du \ ,
\end{equation}
we find
\begin{equation}
\sum_{n=0}^\infty \binom{2n}n z^n = \frac1\pi \int_0^\pi \frac{1}{1-4z\cos^2(u/2)}\,du\,.
\end{equation}
By comparing with \qref{e:biwp}, we deduce that 
\begin{equation}
w_2(t) = \frac1\pi \arccos \sqrt{\frac t4}.
\end{equation}

\begin{remark} For an arbitrary $p>1$, an explicit formula for the inverse of $w_p$
may be obtained in a similar way, based upon the integral representation
formula \qref{e:rk} for binomial coefficients. 
Set $k=n$, $r=pn$ and 
\[
f_p(u) =  \frac{\sin^p \pi u}{\sin (\pi u/p) \sin^{p-1}((1-1/p)\pi u)}\ .
\]
Then $f_p(u)$ decreases from $\tau_p$ to 0 as $u$ increases from $0$ to $1$, and
we deduce that
\begin{equation}
\sum_{n=0}^\infty \binom{pn}n z^n = 
 \int_0^1\frac1{1-f_p(u)z}\,du 
= \int_0^{\tau_p} \frac1{1-tz}d(1-f_p\inv(t))\ .
\end{equation}
By comparing with \qref{e:biwp}, it follows
\begin{equation}
p w_p(t) = f_p\inv(t), \quad 0<t<\tau_p \ .
\end{equation} 
\end{remark}

\begin{thm}[\cite{MlotkowskiP14}]
Let $p$ and $r$ be real with $p>0$. The sequence \qref{e:biseq}
is the sequence of moments of some probability distribution $\nu_{p,r}$ 
having compact support in $[0,\infty)$ if and only if  $p\ge 1$ and $p\ge r\ge0$. 
In this case, $\nu_{p,r}$ is supported in the minimal interval $[0,\tau_p]$ with
 $\tau_p = p^p/(p-1)^{p-1}$ for $p>1$, $\tau_1=1$.
\end{thm}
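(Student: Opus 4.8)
The plan is to proceed exactly as in the preceding Lemma, with the generating function $B_{p,r}=B_p^r$ of \qref{e:Bpr} replaced by the generating function $D_{p,r}(z)=\sum_{n\ge0}\binom{pn+r-1}{n}z^n$ of the sequence \qref{e:biseq}. Since $\binom{r-1}{0}=1$, Corollary~\ref{c:ab} reduces the theorem to the assertion that $D_{p,r}$ is a Pick function analytic and nonnegative on some interval $(-\infty,1/\tau)$, $0<\tau<\infty$, if and only if $p\ge1$ and $p\ge r\ge0$, with minimal $\tau=\tau_p$. The starting point is the identity
\[
D_{p,r}(z)=\frac{B_p(z)^r}{\,p-(p-1)B_p(z)\,},
\]
which follows from $B_{p,r}=B_p^r$ and \cite[eq.~(5.61)]{Graham} (equivalently from $\binom{pn+r-1}{n}=\bigl(1+\tfrac{p-1}{r}n\bigr)A_n(p,r)$ and a little algebra). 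Throughout I use the properties of $B_p=B_{p,1}$ established in the Lemma: for $p>1$ one has $B_p\in P(-\infty,z_p)$ with $z_p=1/\tau_p$, $B_p$ carries $(-\infty,z_p)$ increasingly onto $(0,\frac p{p-1})$, $B_p(0)=1$, $B_p(z)\to0$ as $z\to-\infty$, $(z_p,\infty)$ is a genuine branch cut (so $\im B_p^+(z)>0$ there), and $z=\psi_p(B_p(z))$ with $\psi_p(c)=(c-1)/c^p$.

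For the \emph{if} direction with $p=1$ we have $B_1(z)=1/(1-z)$ and $D_{1,r}(z)=(1-z)^{-r}$; for $0\le r\le1$ this is the composition of the Pick function $w\mapsto w^r$ with the Pick function $z\mapsto 1/(1-z)$, and it is positive on $(-\infty,1)$. For $p>1$ I would work in the variable $c=B_p(z)$: using $zB_p^p=B_p-1$ and $z=\psi_p(c)$ one checks that $zD_{p,r}(z)=\Phi_r(B_p(z))$, where $\Phi_r(c)=(c-1)c^{r-p}/(p-(p-1)c)$, and that $\Phi_r=\Phi_p^{r/p}\,\Phi_0^{1-r/p}$ for $0\le r\le p$. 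Now $\Phi_p(c)=(c-1)/(p-(p-1)c)$ is a Pick function, since a one-line computation gives $\im\Phi_p(c)=\im c/|p-(p-1)c|^2$; hence $zD_{p,p}=\Phi_p\circ B_p\in P(-\infty,z_p)$. For $\Phi_0$ I would argue directly that $zD_{p,0}(z)=z/(p-(p-1)B_p(z))$ is a Pick function: it is analytic and nonvanishing on $\HH$ (as $z\ne0$ and $B_p(z)\ne\frac p{p-1}$ there), so $\arg zD_{p,0}$ is harmonic on $\HH$, and its boundary values lie in $[0,\pi]$ --- on $(-\infty,z_p)$ the function is real, while on the cut $(z_p,\infty)$ one has $\psi_p(B_p^+(z))=z>0$, so $\arg zD_{p,0}^+(z)=-\arg\bigl(p-(p-1)B_p^+(z)\bigr)\in(0,\pi)$ because $\im\bigl(p-(p-1)B_p^+(z)\bigr)=-(p-1)\im B_p^+(z)<0$; the maximum principle then gives $zD_{p,0}\in P(-\infty,z_p)$. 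Since $zD_{p,p}$ and $zD_{p,0}$ both take the sign of $z$ on $(-\infty,z_p)$, their geometric mean $zD_{p,r}=(zD_{p,p})^{r/p}(zD_{p,0})^{1-r/p}$ lies in $P(-\infty,z_p)$ as well; as $(zD_{p,r})'(0)=\binom{r-1}{0}=1$, Corollary~\ref{c:ab}(iii) shows $(\binom{pn+r-1}{n})$ is the moment sequence of a probability measure on $[0,\tau_p]$, and $D_{p,r}(z)\to\infty$ as $z\uparrow z_p$ shows $\tau_p$ is minimal.

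For the \emph{only if} direction I would dispose of the three complementary ranges just as in the Lemma. If $r>p\ge1$, then mirroring case~4 one tracks $\arg zD_{p,r}(z)=\arg\Phi_r(B_p(z))$ along the cut: as $z\to+\infty$ with $z>z_p$ one has $B_p^+(z)\to0$ with $\arg B_p^+(z)\to\pi/p$, so $\arg zD_{p,r}(z)\to\pi+(r-p)\tfrac\pi p=\tfrac{r}{p}\pi>\pi$, which is impossible for a Pick function. If $0<p<1$, then (case~5) $B_p$ is real-analytic on all of $\R$ and $p-(p-1)B_p>p>0$ there, so $D_{p,r}$ is real-analytic on $\R$; being nonconstant (inspect its first two coefficients), it cannot be Pick, by the representation \qref{e:Grep}--\qref{e:mudef}. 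If $r<0$ (the case $0<p<1$ being already covered), then (case~6) $B_p(z)\to0$ as $z\to-\infty$ forces $D_{p,r}(z)\to\infty$ there, whereas $D_{p,r}(0)=1$, so $D_{p,r}$ is not nondecreasing near $-\infty$ and hence not Pick.

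I expect the crux to be the two facts about $\Phi_0$ --- that $zD_{p,0}$ is analytic and zero-free on $\HH$ and has boundary argument in $[0,\pi]$ (the cut computation being the essential point) --- and the bookkeeping in the geometric-mean step: it produces a function that is \emph{real} and analytic across $(-\infty,z_p)$ precisely because $r/p+(1-r/p)=1$, so the branch contributions of $(zD_{p,p})^{r/p}$ and $(zD_{p,0})^{1-r/p}$ on the negative part of that interval cancel. All remaining steps are routine once the properties of $B_p$ from the Lemma are in hand.
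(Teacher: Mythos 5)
Your proof is correct and reaches the same conclusion, but the key step (the case $p>1$, $0\le r\le p$) takes a genuinely different route from the paper. The paper establishes that $zE_{p,r}$ is Pick by a two-sided argument-bound: the lower bound $\arg zE_{p,r}(z)=\arg z+\arg E_{p,0}(z)+r\arg B_p(z)>0$ (each summand nonnegative) and the upper bound $\arg zE_{p,r}(z)=\arg zE_{p,p}(z)+(r-p)\arg B_p(z)<\pi$ (using that $(p-1)zE_{p,p}$ equals the right side of \qref{e:Bpder}, hence Pick). This handles all $r\in[0,p]$ simultaneously without ever needing $zE_{p,0}$ to be Pick as a standalone lemma. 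You instead factor $zD_{p,r}=(zD_{p,p})^{r/p}(zD_{p,0})^{1-r/p}$ and prove the two endpoint functions are Pick separately --- $zD_{p,p}$ via the explicit computation $\im\Phi_p(c)=\im c/|p-(p-1)c|^2$ (equivalent to the paper's ODE-based identification, but more direct), and $zD_{p,0}$ via a maximum-principle argument on $\arg$ over $\HH$ --- then conclude by convexity of $\arg$ under geometric means. The geometric-mean decomposition is a nice structural observation, but it forces you to establish the $r=0$ endpoint independently, which is precisely what the paper's two-sided inequality sidesteps. Your maximum-principle step has a small gap as written: you invoke the maximum principle for the harmonic function $\arg zD_{p,0}$ on the unbounded domain $\HH$ without first noting that it is bounded (which it is, since $\arg z$ and $\arg D_{p,0}$ each lie in $(0,\pi)$, so the sum lies in $(0,2\pi)$); once boundedness is recorded, the Poisson/Phragm\'en--Lindel\"of argument goes through. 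On the other hand, your treatment of $r>p$ (tracking $\arg\Phi_r(B_p^+(z))\to r\pi/p>\pi$ as $z\to+\infty$ along the cut) is sharper than the paper's somewhat terse case 4, and you explicitly verify minimality of $\tau_p$ by noting $D_{p,r}(z)\to\infty$ as $z\uparrow z_p$, a point the paper's proof does not spell out. The remaining cases ($p=1$, $0<p<1$, $r<0$) agree with the paper.
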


\begin{proof} The generating function $E_{p,r}(z)$ for the sequence \qref{e:biseq}
is known to satisfy (see \cite{MlotkowskiP14} and \cite[eq.~(5.61)]{Graham})
\begin{equation}
E_{p,r}(z) = \frac{B_p(z)^{r}}{p-(p-1)B_p(z)} \,.
\end{equation}
From what was proved before, $E_{p,r}$ (like $B_p$) is analytic and nonnegative
on $(-\infty,z_p)$, and analytic in the upper half plane.
By Corollary~\ref{c:ab}, it suffices to show that $E_{p,r}$ is a Pick function 
if and only if $p\ge1$ and $p\ge r\ge0$.
Similarly to the proof of Theorem~\ref{t:FC}, the proof breaks into several cases.

1. \textit{The case $r<0$.} In this case, $E_{p,r}(z)$ is decreasing in $z$ for large $z<0$, 
hence $E_{p,r}$ cannot be a Pick function. 

2. \textit{The case $0<p<1$, $r\ge0$.}  As in case 5 of the proof of Theorem~\ref{t:FC},
$B_p$ and hence $E_{p,r}$ is globally analytic and positive on $\R$. The only Pick functions
with this property are constant, so $E_{p,r}$ is not a Pick function.

3. \textit{The case $p>1$, $0\le r\le p$.}  By \qref{e:bppbp}, $(p-1)zE_{p,p}(z)$
equals the right-hand side of \qref{e:Bpder} and therefore is a Pick function. 
Now, for $0<\arg z<\pi$ we have, on the one hand, that
\begin{equation}
\arg z E_{p,r}(z) = \arg z+ \arg E_{p,0}(z) + r\arg B_p(z) >0,
\end{equation}
and on the other hand, that
\begin{equation}\label{e:zDpr}
\arg zE_{p,r}(z) = \arg zE_{p,p}(z)+ (r-p) \arg B_p(z) <\pi\ .
\end{equation}
This shows $zE_{p,r}(z)$ is a Pick function, hence $E_{p,r}$ is a Pick function by
Corollary~\ref{c:ab}.

4. \textit{The case $p>1$, $r>p$.}  In this case we claim $zE_{p,r}(z)$ is not a Pick function.
To see this, take $z=e^{i\theta}$ for $0\le \theta\le \pi$ and note that
in \qref{e:zDpr},
the last term is positive and the first term varies from $0$ to (at least) $\pi$.
Hence the sum is somewhere more than $\pi$ for some $z$ in the upper half plane.

5. \textit{The case $p=1$.} In this case, $E_{1,r}(z)=(1-z)^{-r}$, and this is a Pick function
if and only if $0\le r\le 1$.
\end{proof}

\begin{remark}
 For rational $p\ge1$, with $p\ge1+r>0$, explicit formulae
in terms of the Meijer $G$ function have been derived 
by Mlotkowski and Penson \cite{MlotkowskiP14}
for a density denoted $V_{p,r}(t)$ with the property that
\begin{equation}
\sum_{n=0}^\infty \binom{pn+r}n z^n = \int_0^{\tau_p} \frac1{1-tz} V_{p,r}(t)\,dt\,.
\end{equation}
By comparing with the above, it follows 
\begin{equation}
-pw_p'(t) = V_{p,0}(t).
\end{equation}
\end{remark}

\section*{Acknowledgements}
We thank Tewodros Amdeberhan (private communication) for finding
the second expression in  \qref{e:bpfc} from the first 
by using the Zeilberger algorithm~\cite{AeqB}.
The authors are grateful to Pierre Degond for initiating 
a collaboration which led to this work. 
RLP is grateful to Govind Menon for discussions on Pick functions.
This material is based upon work supported by the National
Science Foundation under 
grants DMS 1211161 and RNMS11-07444 (KI-Net) 
and partially supported by the Center for Nonlinear Analysis (CNA)
under National Science Foundation grant 0635983.

We are also grateful to Alan Sokal for finding the gaps in the
proof of Lemma~3 and suggesting improved proofs.


\bibliographystyle{plain}
\bibliography{discretecm-arxiv.bbl}



\appendix

\vfil\pagebreak
\section*{Corrigendum}

There are gaps in Steps 2 and 4 of the proof of Lemma~3, 
and an inaccuracy in equation (9).
Lemma~3 directly implies Theorem~5, which states in part that
for any real $p$ and $r$, the sequence $a_{p,r}=(A_n(p,r))_{n\ge0}$ 
of Fuss-Catalan (or Raney) numbers
is the sequence of moments of a probability distribution having compact support in $[0,\infty)$
if and only if $p\ge1$ and $p\ge r\ge0$.
The generating functions $B_{p,r}$ associated to the sequences $a_{p,r}$ are known to satisfy
the functional equations
\begin{equation}\label{e:f1}
B_{p,1}(z) = 1+ z B_{p,1}(z)^p\,,
\end{equation}
\begin{equation}\label{e:f2}
B_{p,r}(z) = B_{p,1}(z)^r \,.
\end{equation}

Step 2 of the proof of Lemma~3 asserts that if $p>1=r$, then $B_p = B_{p,1}$ is a Pick function
analytic and nonnegative on a certain interval $(-\infty,z_p)$. There is a gap in the argument,
however, due to a failure to globally control the argument of $B_p(z)^p$, 
so that if $B_p(z)$ approaches a real number, $B_p(z)^p$ need not.

The analytic continuation of $B_p(z)$ to $\C\setminus[z_p,\infty)$ is carried out
by integrating a differential equation implied by the functional equation \eqref{e:f1}, namely
\begin{equation}\label{e:bpode}
B_p'(z) = \frac1z\frac{B_p(B_p-1)}{p-(p-1)B_p} \,,
\end{equation}
along rays $t\mapsto te^{i\theta}$ with $\theta\in(0,\pi)\cup(-\pi,0)$. 
We restrict attention to $\theta\in(0,\pi)$, as the case $\theta\in(-\pi,0)$
is similar with some signs changed. 

To be more precise, writing $w(t,\theta)=B_p(te^{i\theta})$ we solve the differential equation
\begin{equation}\label{e:wode}
\frac{\D w}{\D t} = \frac1t \frac{w(w-1)}{w-p(w-1)}, \qquad w(t_0,\theta) = w_0 \,,
\end{equation}
taking initial value $w_0=B_p(t_0e^{i\theta})$ as given for some small $t_0>0$ by the series.
Since $B_p'(0)=A_1(p,1)=1$, this initial value has positive imaginary part if $t_0$ is small enough. 
By continuation theory for ordinary differential equations, 
a unique solution exists for $t$ in a maximal interval $[t_0,T)$, with the property that  
if $T<\infty$ then as $t\uparrow T$,  $w(t,\theta)$ must exit any compact subdomain 
of $\C\setminus\{q\}$ where $q=\frac{p}{p-1}$.  That is, 
for $t\uparrow T<\infty$, either $|w(t,\theta)|\to\infty$ or $w(t,\theta)\to q$.

We now argue differently than in the proof of Lemma~3 above, 
noting that the real line $\R$ is an invariant
set for \eqref{e:wode}. That is, if $w(t_1,\theta)$ is ever real for some $t_1\in[t_0,T)$, 
then $w(t,\theta)$ is real for all $t$. 
Since $w_0$ is not real, necessarily $w=w(t,\theta)$ has positive imaginary part
for all $t\in[t_0,T)$, and the same for $w-1$. By separating variables in \eqref{e:wode} we find
\begin{equation}\label{e:logw}
\Log(w-1) - p\Log w = \log t + i\theta \,,
\end{equation}
as this holds when $t=t_0$ due to \eqref{e:f1}. It follows
$\arg w\in(0,\frac{\pi-\theta}p)$ since  
\begin{equation}\label{e:argw}
0< p \arg w = \arg({w-1}) - \theta <\pi - \theta. 
\end{equation}

The rest of the proof in Step 2 goes as in the given proof of Lemma 3, 
using the functional equation:
If $|w|\to\infty$ as $t\uparrow T$ then by \eqref{e:logw},
$te^{i\theta}= (w-1)/w^p\to0$, contradiction, 
while if $w\to q$ then $te^{i\theta}\to (q-1)/q^p>0$, also a contradiction.
Hence $T=\infty$.
Thus we obtain an analytic continuation of $B_p(z)$ to the upper half plane 
having everywhere positive imaginary part there, and this completes the 
proof in Step 2.

\medskip
The proof in Step 3 is simpler now, since whenever $\im z>0$, from \eqref{e:argw} 
we get
\[
0<\arg B_p(z) <(\pi-\arg z)/p 
\]
for $p>1$, and also for $p=1$ because $B_1(z)=1/(1-z)$.
Then for $p\ge1$ and $p\ge r\ge0$ we have $0<\arg B_p(z)^r<\pi$ 
in the upper half plane.
Hence $B_p(z)^r = \exp(r\Log B_p(z))$ is a Pick function 
and it is analytic and positive on $(-\infty,z_p)$.

\medskip
In Step 4, we want to show that if $r>p\ge1$ then $B_p(z)^r$ is not a Pick function. 
With $w=B_p(te^{i\theta})$ as above, from \eqref{e:f1} it follows that
$w\to0$ as $t\to\infty$, whence from \eqref{e:argw} (valid also for $p=1$)
we infer $\arg w\to (\pi-\theta)/p$.  If we suppose $B_p(z)^r$ is Pick, then as 
$t\to\infty$ we find
$ \arg w^r =  r \arg w \to \frac rp (\pi-\theta).$
This is greater than $\pi$ if $\theta>0$ is small enough, yielding a contradiction. 
Hence $B_p(z)^r$ cannot be Pick. 

\medskip
Finally, eq.~(9) 
should read as follows, since $\mu$ is taken as right continuous:
\begin{equation}
\tag{9}
\mu(b)-\mu(a) = \mu(a,b] = \frac{\mu\{b\}-\mu\{a\}}2+
\lim_{h\to0^+} \frac1\pi\int_a^b \im F_*(t+ih) \,dt, \quad a,b\in\R.
\end{equation}

\end{document}